\newtheorem{theorem}{Theorem}[section]
\newtheorem{conjecture}[theorem]{Conjecture}
\newtheorem{question}[theorem]{Question}
\newtheorem{corollary}[theorem]{Corollary}
\newtheorem{definition}[theorem]{Definition}
\newtheorem{lemma}[theorem]{Lemma}
\theoremstyle{remark}
\newtheorem{remark}[theorem]{Remark}
\numberwithin{equation}{section}
\newcommand{\Acal}{\mathscr{A}}
\newcommand{\Bcal}{\mathscr{B}}
\newcommand{\Ecal}{\mathscr{E}}
\newcommand{\Kcal}{\mathscr{K}}
\newcommand{\Lcal}{\mathscr{L}}
\newcommand{\Ocal}{\mathscr{O}}
\newcommand{\Tcal}{\mathscr{T}}
\newcommand{\Pro}{\mathbb{P}}
\newcommand{\C}{\mathbb{C}}
\newcommand{\Q}{\mathbb{Q}}
\newcommand{\R}{\mathbb{R}}
\newcommand{\A}{\mathbb{A}}
\newcommand{\B}{\mathbb{B}}
\newcommand{\rk}{\mathrm{rank}\,}
\newcommand{\Proj}{\mathrm{Proj}}
\newcommand{\alg}{\mathrm{alg}}
\newcommand{\an}{\mathrm{an}}
  \DeclareFontFamily{U}{wncy}{}
    \DeclareFontShape{U}{wncy}{m}{n}{<->wncyr10}{}
    \DeclareSymbolFont{mcy}{U}{wncy}{m}{n}
    \DeclareMathSymbol{\Sha}{\mathord}{mcy}{"58}
\begin{document}
\title[Xeric varieties]{Xeric varieties}

\author{Natalia Garcia-Fritz}
\address{ Departamento de Matem\'aticas,
Pontificia Universidad Cat\'olica de Chile.
Facultad de Matem\'aticas,
4860 Av.\ Vicu\~na Mackenna,
Macul, RM, Chile}
\email[N. Garcia-Fritz]{natalia.garcia@uc.cl}%

\author{Hector Pasten}
\address{ Departamento de Matem\'aticas,
Pontificia Universidad Cat\'olica de Chile.
Facultad de Matem\'aticas,
4860 Av.\ Vicu\~na Mackenna,
Macul, RM, Chile}
\email[H. Pasten]{hector.pasten@uc.cl}%

\thanks{N.G.-F. was supported by ANID Fondecyt Regular grant 1211004 from Chile. H.P was supported by ANID Fondecyt Regular grant 1230507 from Chile.}
\date{\today}
\subjclass[2020]{Primary: 11D45; Secondary: 14G05, 11D88, 32Q45} %
\keywords{Counting rational points, sparsity, rational curve, $p$-adic analytic maps}%

\begin{abstract} Let $X$ be a smooth projective variety over a number field $k$. The Green--Griffiths--Lang conjecture relates the question of finiteness of rational points in $X$ to the triviality of rational maps from abelian varieties to $X$ and to complex hyperbolicity. Here we investigate the phenomenon of sparsity of rational points in $X$ ---roughly speaking, when there are very few rational points if counted ordered by height. We are interested in the case when sparsity holds over every finite extension of $k$, in which case we say that the variety is \emph{xeric}. We initiate a systematic study of the relation of this property with the non-existence of rational curves in $X$ as well as with certain notion of $p$-adic hyperbolicity. 
\end{abstract}

\maketitle



\section{Introduction} 
In this article $k$ will always denotes a number field and $k^{\alg}$ an algebraic closure of $k$. 

\subsection{Finiteness of rational points} 

Following Lang, let us recall that a variety $X$ defined over $k$ is \emph{Mordellic} if for every finite extension $L/k$ we have that the set of $L$-rational points $X(L)$ is finite. The following conjecture by Green--Griffiths--Lang (in the analytic aspect by Green--Griffiths \cite{GG} and in the arithmetic aspect by Lang \cite{Lang}) predicts a characterization of Mordellic varieties in terms of algebraic geometry and in terms of complex analysis:

\begin{conjecture} Let $X$ be a smooth projective geometrically irreducible variety over $k$. The following are equivalent:
\begin{itemize}
\item[(i)] $X$ is Mordellic;
\item[(ii)] For every abelian variety $A$ over $k^{\alg}$, all rational maps $f:A\dasharrow X_{k^{\alg}}$ are constant;
\item[(iii)] For every embedding $\sigma:k\to \C$, the complex manifold $X_\sigma^\an$ is (Brody) hyperbolic, that is, every complex holomorphic map $f:\C\to X_\sigma^\an$ is constant.
\end{itemize}
\end{conjecture}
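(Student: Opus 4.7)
The statement is the full Green--Griffiths--Lang conjecture, one of the central open problems of Diophantine geometry, so a complete proof is not available; what I can offer is a plan that isolates which implications are accessible and identifies where the fundamental obstruction lies.

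My plan is to split the three-way equivalence into six implications. The easiest is (i) $\Rightarrow$ (ii): given a non-constant rational map $f:A\dasharrow X_{k^{\alg}}$, first descend to a finite extension $L/k$ over which $A$, $X$, and $f$ are defined, then apply Mordell--Weil to pass to a finite extension $L'/L$ with $A(L')$ Zariski-dense (enlarge $L'$ until the rank is positive and use translates by torsion to meet every component of the domain of definition). Infinitely many $L'$-points of $A$ lie in the domain of $f$, and the non-constancy of $f$ ensures infinitely many of their images are distinct, contradicting that $X$ is Mordellic. For (iii) $\Rightarrow$ (ii), pick $\sigma:k\to\C$, use the universal covering $\C^g\to A_\sigma^\an$, and precompose any $\C\to \C^g$ followed by the analytification of $f$ on the open locus where $f$ is regular; generic lines in $\C^g$ miss the indeterminacy locus, producing a non-constant holomorphic map $\C\to X_\sigma^\an$ and contradicting Brody hyperbolicity.

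The hard directions are (ii) $\Rightarrow$ (i), (ii) $\Rightarrow$ (iii), and the two remaining implications starting from (i) or ending at (iii). For (ii) $\Rightarrow$ (i) I would attempt an induction through the Albanese morphism $X\to \mathrm{Alb}(X)$: when the Albanese image is a proper subvariety of an abelian variety, Faltings' theorem on subvarieties of abelian varieties controls its rational points, and one would fiber $X$ over this image and apply the conjecture inductively to the generic fiber, which satisfies (ii) as well. The genuinely new case is when $X$ has trivial Albanese while still admitting no non-constant rational map from an abelian variety; here one would need to extract Diophantine finiteness from some positivity of the cotangent bundle, essentially via a case of Vojta's height inequality. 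For (ii) $\Rightarrow$ (iii), the natural mechanism is Bogomolov-style: absence of maps from abelian varieties should force positivity of $\Omega_X^1$, which via jet differential techniques (Siu, Demailly, Diverio--Merker--Rousseau) yields hyperbolicity of generic hypersurfaces of high degree, but translating this into the general statement is not known.

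The main obstacle, shared by all three hard implications, is the absence of any mechanism that converts global geometric positivity into Diophantine finiteness or Brody hyperbolicity in dimension $\geq 2$ in full generality; what one really needs is Vojta's conjectures, which are themselves open. Given that, the pragmatic plan is precisely the one announced in the abstract of the present paper: to replace the Mordellic condition by the weaker property of sparsity over every finite extension of $k$, where $p$-adic analytic tools can substitute for the missing archimedean hyperbolicity input and allow one to prove unconditional results toward a $p$-adic analogue of the conjecture.
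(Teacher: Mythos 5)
The statement is a conjecture, not a theorem; the paper provides no proof and merely remarks that ``(i) or (iii) implies (ii)'' is easy while the remaining implications are, in general, open problems. Your proposal correctly identifies precisely the same two accessible implications and accurately frames the rest as depending on open conjectures (Vojta, Bombieri--Lang, etc.), which matches the paper's stance.

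Two small technical remarks on the easy directions you sketch. For (i) $\Rightarrow$ (ii), the claim that one can find a finite extension $L'/L$ with $A(L')$ Zariski-dense is a genuine (if classical) input --- it follows from a theorem of Frey--Jarden rather than from Mordell--Weil plus torsion alone, since over a fixed finite extension the group of rational points is finitely generated and a single point of infinite order only densifies the abelian subvariety it generates. For (iii) $\Rightarrow$ (ii), the discussion of the indeterminacy locus can be avoided entirely: a rational map from an abelian variety to a projective variety is automatically a morphism (abelian varieties contain no rational curves, so resolving indeterminacy would introduce contracted rational curves in $A$), after which one simply composes the exponential $\C^g\to A_\sigma^\an$ with a non-constant affine map $\C\to\C^g$ and with $f$.

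These points do not affect the correctness of your overall assessment; they only tighten the two implications that actually admit a proof. The closing paragraph of your proposal also correctly reflects the paper's motivation for introducing xericity in place of Mordellicity.
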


Here, $X_\sigma^\an$ is defined as the complex manifold $(X\otimes_\sigma \C)^{\an}$. 

It is easy to see that either of (i) or (iii) implies (ii). Any other implication is in general a non-trivial problem. For instance, in the case of curves of genus $g\ge 2$ property (iii) holds by a classical theorem of Picard, while the content of (i) is Mordell's conjecture, proved by Faltings \cite{Faltings1}.

All known cases of (positive dimensional) Mordellic varieties basically come from two sources (up to \'etale covers): projective sub-varieties of the moduli space of principally polarized abelian varieties of a given dimension with suitable level structure \cite{Faltings1}, or hyperbolic sub-varieties of abelian varieties \cite{Faltings2,Faltings3}, both due to Faltings. We remark that the second case builds on work of Vojta \cite{Vojta}. 

\subsection{Sparsity of rational points} While finiteness of rational points is an important qualitative aspect of the arithmetic of varieties, there is also the problem of determining whether the rational points of a variety are difficult to find, even if there are infinitely many of them. This is the problem of \emph{sparsity of rational points}. For instance, one can have infinitely many rational points on an elliptic curve $E$ over $k$, but since the (logarithmic) N\'eron--Tate height is quadratic with respect to the group structure of $E$, the complexity of these points grows rather fast. Before continuing with our discussion, let us make some precise definitions on sparsity of rational points.

Let $X/k$ be a projective variety. Given a line sheaf $\Lcal$ on $X$ there is a (multiplicative) height 
$$
H_{\Lcal}:X(k^{\alg})\to \R_{\ge 0}
$$
normalized to $\Q$, well-defined up to a multiplicative factor $\exp(O(1))$. If $\Lcal$ is ample, then $H_{\Lcal}$ is bounded away from $0$ and it satisfies the Northcott property: finiteness of algebraic points of bounded degree and height. 

Let $U\subseteq X$ be a Zariski open set, and let $L/k$ be a finite extension. If the height associated to $\Lcal$ has the Northcott property on $U$ (e.g. if $\Lcal$ is ample), we can define the counting function on the variable $T\ge 1$
$$
N_\Lcal(U,L,T)=\#\{x\in U(L) : H_\Lcal (x)\le T\}.
$$
We say that the set of $L$-rational points $U(L)$ is \emph{sparse} in $U$ if, choosing $\Lcal=\Acal$ an ample line sheaf, for every $\epsilon>0$ we have the estimate
$$
N_\Acal(U,L,T)\ll T^\epsilon
$$
as $T\to \infty$, where $\ll$ is Vinogradov's notation and the implicit constant depends on possibly all parameters except $T$. From now on, we will write $\ll_{\epsilon}$ to indicate that the estimate should hold for every $\epsilon>0$ and the implicit constant can depend on this parameter.

Any two ample heights can be compared up to a fixed positive power, so, the property that $U(L)$ be sparse in $U$ is actually independent of the choice of ample line sheaf $\Acal$.

We now come to the central notion studied in this article. 

\begin{definition} We say that $U$ is \emph{xeric}\footnote{From ancient greek, meaning ``dry''. We remind the reader that in ecology, the biome ``deserts and xeric shrublands'' is characterized by its sparse (or very limited) vegetation. For example one has the \emph{Atacama Desert} in Chile: most of it has no vegetation at all, while some parts such as \emph{Pampa del Tamarugal} have some rather sparse vegetation. In the same fashion, xeric varieties include those that are Mordellic (finitely many rational points), as well as those whose set of rational points is infinite and sparse.} if $U(L)$ is sparse in $U$ for every finite extension $L/k$. 
\end{definition}

Note that every Mordellic variety is xeric, but the converse is not true, e.g. abelian varieties are xeric by the Mordell--Weil theorem and the quadratic behavior of the N\'eron--Tate height attached to a symmetric ample divisor.

For instance, if $X$ is a smooth projective curve of genus $g$, we have that:

\begin{itemize}
\item If $g=0$, then for some finite extension $L/k$ we have $X_L\simeq \Pro^1_L$, so $X$ is not xeric,
\item if $g=1$, then $X$ is an elliptic curve (after a finite extension of $k$), hence it is xeric, and
\item if $g\ge 2$, then $X$ is xeric because in fact it is Mordellic by Faltings' theorem (note that for the purpose of showing that $X$ is xeric one can simply embed $X$ into its jacobian.) 
\end{itemize}

\subsection{The Main Conjecture} As in the case of the Green--Griffiths--Lang conjecture regarding Mordellic varieties, we consider the problem of giving geometric and analytic characterizations of xeric varieties. In this direction we propose the following:

\begin{conjecture}[Main Conjecture]\label{MainConj} Let $X$ be a smooth projective geometrically irreducible variety over $k$. The following are equivalent:
\begin{itemize}
\item[($1$)] $X$ is xeric;
\item[($2$)] $X_{k^\alg}$ does not contain (possibly singular) rational curves; 
\item[($3_\exists$)] For some prime $p$ and some embedding $\sigma:k\to \C_p$, every $p$-adic analytic map $f:\C_p\to X_\sigma^{\an}$ is constant;
\item[($3_\forall$)] For every prime $p$ and each embedding $\sigma:k\to \C_p$, every $p$-adic analytic map $f:\C_p\to X_\sigma^{\an}$ is constant.
\end{itemize}
\end{conjecture}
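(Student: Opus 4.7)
The plan is to close the cycle
\[
(1) \Longrightarrow (2) \Longrightarrow (3_\forall) \Longrightarrow (3_\exists) \Longrightarrow (1).
\]
Of these, $(3_\forall)\Rightarrow(3_\exists)$ is trivial and the counting step $(1)\Rightarrow(2)$ is elementary; the two serious steps are $(3_\exists)\Rightarrow(1)$, which should follow from a $p$-adic Bombieri--Pila-style interpolation argument, and $(2)\Rightarrow(3_\forall)$, a $p$-adic Brody hyperbolicity theorem. I expect the latter to be the main obstacle.

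For $(1)\Rightarrow(2)$ I argue by contraposition. A rational curve on $X_{k^\alg}$ is defined over some finite extension $L/k$ via a non-constant morphism $\phi:\Pro^1_L\to X_L$ (composing with the normalization if needed). For an ample line sheaf $\Acal$ on $X_L$, the pullback $\phi^*\Acal$ has some positive degree $d$ on $\Pro^1$, whence by Schanuel's theorem
\[
N_\Acal(X,L,T) \gg_L T^{2[L:\Q]/d},
\]
which is not sub-polynomial in $T$, contradicting xericity over $L$.

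For $(3_\exists)\Rightarrow(1)$, fix $L/k$ and an extension of $\sigma$ to $L\hookrightarrow\C_p$. Cover $X_\sigma^{\an}$ by finitely many small $p$-adic polydisks via affine charts. Assume for contradiction that $N_\Acal(X,L,T)\gg T^{\epsilon}$ for some $\epsilon>0$ and arbitrarily large $T$. By pigeonhole, for each such $T$ many points accumulate in one polydisk; a $p$-adic determinant argument in the spirit of Bombieri--Pila (in its non-archimedean incarnations such as those of Cluckers--Comte--Loeser) produces low-degree polynomial maps $\C_p\to\A^N$ passing through these points whose image, thanks to the sheer number of interpolation constraints and B\'ezout bounds, must lie in $X$. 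Composing with the $p$-adic analytification and extracting a limiting map along $T\to\infty$ via a Brody-style rescaling to preserve non-constancy, one manufactures a non-constant $p$-adic entire map $\C_p\to X_\sigma^{\an}$, contradicting $(3_\exists)$.

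The main obstacle is $(2)\Rightarrow(3_\forall)$, asserting that a smooth projective variety with no rational curves is $p$-adically Brody hyperbolic for every prime $p$. While the complex analog is the open Green--Griffiths--Lang conjecture, the $p$-adic setting is often more tractable, for instance because the $p$-adic exponential has only a finite radius of convergence; this underlies Cherry's theorem that every $p$-adic entire map to an abelian variety is constant. My plan would be to take a putative non-constant entire map $f:\C_p\to X_\sigma^{\an}$, pass to the algebraic Zariski closure $Y\subset X_{k^\alg}$ of its image, and then appeal to $p$-adic value-distribution inequalities (in the tradition of Cherry, Ru, and An--Cherry--Wang) to show that $Y$ must be uniruled and hence contain a rational curve, contradicting (2). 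The genuine difficulty is the absence of a $p$-adic Bloch--Ochiai-type structure theorem outside very special ambient varieties, so completing this step appears to require a genuinely new $p$-adic hyperbolicity input.
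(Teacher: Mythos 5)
The statement you are trying to prove is a \emph{conjecture}, not a theorem: the paper does not prove it, and proving all four equivalences would essentially resolve both Manin's rational curve conjecture and Cherry's $p$-adic Brody conjecture simultaneously. What the paper actually records are the ``easy'' directions: $(3_\forall)\Rightarrow(3_\exists)$ is trivial, and each of $(1)$ and $(3_\exists)$ implies $(2)$. Your argument for $(1)\Rightarrow(2)$ (a rational curve over $L$ carries $\gg T^{2[L:\Q]/d}$ points of height $\le T$ by Schanuel, so $X$ is not xeric) is correct and matches the paper's implicit reasoning. But your proposed cycle contains two steps that are genuinely open, and one of them has a concrete flaw worth naming.

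The step $(3_\exists)\Rightarrow(1)$ does not work as sketched. The Bombieri--Pila determinant method, and its non-archimedean analogues, produce an \emph{algebraic} hypersurface (or low-degree subvariety) of the chart containing the accumulating rational points; this is the route to results of type ``either sparse, or lies on a low-degree algebraic curve,'' which is the content of $(1)\Leftrightarrow(2)$-style statements, not of $(3)$. There is no mechanism by which a determinant estimate and B\'ezout produce a $p$-adic analytic map $\C_p\to X^{\an}$. Moreover, ``Brody rescaling'' is a complex-analytic device relying on compactness of the ball of normalized maps (Montel/Arzel\`a--Ascoli); no such normal-families statement holds over $\C_p$, so ``extracting a limiting map along $T\to\infty$'' has no meaning in the non-archimedean setting. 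In fact the implication you actually want is the transposition of the arithmetic heart of the conjecture, and it is wide open. Your step $(2)\Rightarrow(3_\forall)$ is Cherry's conjecture (on the algebraic closure, after a specialization argument), and you correctly flag it as requiring ``a genuinely new $p$-adic hyperbolicity input''; the paper indeed treats it as open and only cites Cherry's theorem for abelian varieties and Berkovich's for curves. In short: the paper states the easy implications $(1)\Rightarrow(2)$, $(3_\exists)\Rightarrow(2)$, $(3_\forall)\Rightarrow(3_\exists)$ and offers the full equivalence as a conjecture; your proposal should state the same, and not present $(3_\exists)\Rightarrow(1)$ or $(2)\Rightarrow(3_\forall)$ as steps that can currently be executed.
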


Here, $\C_p$ denotes a completion of an algebraic closure of $\Q_p$, $X_\sigma$ is the base change $X\otimes_{\sigma}\C_p$, and the upper-script ``$\an$'' denotes the analitification functor (say, in the sense of Berkovich). As the reader might suspect, for a $p$-adic manifold $Y$ we write $f:\C_p\to Y$ as shorthand for $f:(\A^1_{\C_p})^{\an}\to Y$. 
 
It is clear that ($3_\forall$) implies ($3_\exists$), and that either of ($1$) or ($3_\exists$) implies ($2$).

Conjecture \ref{MainConj} is not independent of existing conjectures in the literature. 

First, we recall Manin's rational curve conjecture \cite{Manin}:  if for some open set $U\subseteq X$ and a finite extension $L/k$ we have that $U(L)$ is not sparse, then $U$ contains a non-empty open set of a rational curve defined over $L$. Thus, Manin's rational curve conjecture implies the equivalence between ($1$) and ($2$); note that unlike Manin's conjecture, we do not require the rational curves in ($2$) to be defined over the ground field.

On the other hand, Cherry conjectured \cite{Cherry} that if $Y$ is a smooth projective manifold over $\C_p$ admitting a non-constant $p$-adic analytic map $f:\C_p\to Y$, then $Y$ contains a (possibly singular) rational curve. A specialization argument to descend to $k^\alg$ then shows that Cherry's conjecture implies the equivalence of ($2$), ($3_\exists$), and ($3_\forall$).

Conjecture \ref{MainConj} holds for curves: let $C$ over $k$ be a smooth projective curve of genus $g$, then
\begin{itemize}
\item If $g=0$ then the four items in the Main Conjecture fail;
\item If $g\ge 1$ then $C$ is xeric as we already explained, it is not rational, and ($3_\forall$) holds by Berkovich's theorem for curves (see below in this section).
\end{itemize}

Perhaps the strongest available evidence for the equivalence of ($1$) and ($2$) is the work of McKinnon \cite{McKinnon}, who shows that this equivalence follows from Vojta's Main Conjecture (on proximity functions) and a conjectural part of the Minimal Model Program (namely, that varieties of negative Kodaira dimension are uniruled). In addition, we mention recent work by Brunebarbe--Maculan \cite{BrunebarbeMaculan} (after work by Ellenberg--Lawrence--Venkatesh \cite{ELV}) showing that varieties with large algebraic fundamental group (in the sense of Koll\'ar) are xeric.

Regarding Cherry's conjecture (and the equivalence of ($2$), ($3_\exists$), and ($3_\forall$)) we recall that Cherry \cite{Cherry} proved that if $A$ is an abelian variety over $\C_p$, then every $p$-adic analytic map $f:\C_p\to A$ is constant. This generalizes an earlier result of Berkovich \cite{Berkovich} showing the analogous statement for curves of genus $g\ge 1$. 

It is worth pointing out that the non-existence of non-constant $p$-adic analytic maps to a $p$-adic manifold is not the only reasonable notion of ``$p$-adic hyperbolicity'' that one can consider (see the work by Javanpeykar--Vezzani \cite{JavVez} for a detailed discussion) but it seems to be the correct one for trying to characterize xeric varieties. For notational convenience, let us make the definition by analogy with the complex case (note that this is different from the terminology in \cite{JavVez}):

\begin{definition} Let $Y$ be a $p$-adic manifold over $\C_p$. We say that $Y$ is Brody hyperbolic if every $p$-adic analytic map $f:\C_p\to Y$ is constant.
\end{definition}

We expect that the Main Conjecture \ref{MainConj} will provide a theoretical framework for studying the problem of sparsity of rational points in a systematic way. The goal of this article is to give some evidence for it and to establish some cases where the different items of this conjecture hold.

\subsection{Variant} For a projective variety $X$ over $k$ we let $Z_{rc}(X)\subseteq X$ be the Zariski closure of the union of all possibly singular rational curves in $X_{k^{\alg}}$. Manin's rational curve conjecture \cite{Manin} suggests the following characterization for the largest xeric open set in $X$:

\begin{conjecture}[The largest xeric open set]\label{ConjLargestXeric} Let $X$ be a smooth projective geometrically irreducible variety over $k$. Then $X-Z_{rc}(X)$ is xeric (thus, it is the largest xeric open set in $X$).
\end{conjecture}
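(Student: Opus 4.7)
The conjecture makes two claims---that $U := X \setminus Z_{rc}(X)$ is xeric, and that no larger open is xeric. I would dispose of the second first. Let $V \subseteq X$ be open with $V \cap Z_{rc}(X) \neq \emptyset$. By definition of $Z_{rc}(X)$ as the Zariski closure of the union of all rational curves $C \subseteq X_{k^{\alg}}$, the nonempty open $V \cap Z_{rc}(X)$ of $Z_{rc}(X)$ meets this (Zariski dense) union, so $V$ contains a nonempty open of some rational curve $C$; after enlarging to a finite extension $L/k$, both $C$ and a normalization $\phi : \Pro^1_L \to C$ are defined over $L$. For any ample line sheaf $\Acal$ on $X$, $\phi^*\Acal \cong \Ocal_{\Pro^1}(d)$ with $d \geq 1$, hence $H_\Acal \circ \phi \asymp H^d$ for the standard height $H$ on $\Pro^1$. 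The $L$-points of $\phi^{-1}(V) \subseteq \Pro^1_L$ of standard height at most $T^{1/d}$ number $\gg T^{2[L:\Q]/d}$, and since $\phi$ is generically injective, their images populate $V(L)$ with $H_\Acal$-height $O(T)$. So $V(L)$ is not sparse, confirming that $U$ is maximal once we show it xeric.

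For the xericity of $U$ itself, I would attempt a reduction to the Main Conjecture \ref{MainConj}. By construction, $U$ contains no rational curves over $k^{\alg}$, and if one could produce a smooth projective compactification $Y \supseteq U$ whose $Y_{k^{\alg}}$ is also free of rational curves, the equivalence $(1) \Leftrightarrow (2)$ in the Main Conjecture would deliver $Y$ xeric, hence $U$ xeric. No such compactification exists in general, since blowups generically create ruled exceptional divisors; the realistic substitute is a logarithmic/quasi-projective analogue of the Main Conjecture. Concretely: blow up $X$ along a resolution of $Z_{rc}(X)$ to produce a smooth projective $\tilde X$ with simple normal crossings boundary $D$ satisfying $\tilde X \setminus D \cong U$, and prove for the pair $(\tilde X, D)$ that absence of rational curves in $\tilde X \setminus D$ implies sparsity of its rational points. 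The $p$-adic side of the Main Conjecture suggests the mechanism: from a putative non-sparse family of $U$-points of bounded height, $p$-adic Bombieri--Pila determinant arguments should interpolate a non-constant $p$-adic analytic map $\C_p \to \tilde X_\sigma^{\an}$ with image in $\tilde X \setminus D$, and a logarithmic form of Cherry's theorem would then extract a rational curve in $U$, a contradiction.

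The main obstacle is establishing this logarithmic analogue; as a statement it is essentially equivalent to Manin's rational curve conjecture quoted in the paper, and hence out of current reach. Conditional on Manin's conjecture, the xericity of $U$ becomes immediate by contraposition: any Zariski open $V \subseteq U$ with $V(L)$ non-sparse would contain a Zariski open of a rational curve defined over $L$, contradicting $V \cap Z_{rc}(X) = \emptyset$. Removing that conditional hypothesis appears to require genuinely new $p$-adic analytic input compatible with the boundary divisor $D$---a logarithmic Cherry-type theorem strong enough to rule out $p$-adic analytic curves accumulating dense collections of low-height $L$-points on $U$. That is where I expect the real work to lie.
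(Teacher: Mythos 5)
This statement is a \emph{conjecture} in the paper; the authors do not prove it. Immediately after stating it they write that it ``follows from Vojta's conjecture on proximity functions, and a conjecture in the Minimal Model Program, see the work of McKinnon,'' and earlier they note that Manin's rational curve conjecture implies the equivalence $(1)\Leftrightarrow(2)$ in the Main Conjecture. So there is no proof in the paper to compare against; the relevant comparison is whether you correctly identified the conjectural status and whether your conditional reasoning is sound.

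You did. Your argument for maximality (that any open $V$ meeting $Z_{rc}(X)$ fails to be xeric) is correct, elementary, and unconditional: a nonempty open of $Z_{rc}(X)$ must meet some rational curve $C$ because the union of rational curves is Zariski dense in $Z_{rc}(X)$; after extending scalars so that the normalization $\phi:\Pro^1_L\to C$ exists over $L$ with $\Pro^1_L(L)\neq\emptyset$, the Schanuel-type lower bound $N_\Acal(V,L,T)\gg T^{2[L:\Q]/d}$ with $d=\deg\phi^*\Acal\geq 1$ shows $V(L)$ is not sparse. The paper asserts maximality only parenthetically without writing this out, so this is a genuine (if small) addition. Your reduction of the xericity of $U=X-Z_{rc}(X)$ to Manin's rational curve conjecture, by contraposition, is also correct and is the same conditional dependence the paper itself flags in its discussion of the Main Conjecture; the paper's preferred conditional route for \emph{this} conjecture is instead McKinnon's Vojta $+$ MMP argument, which is logically independent of Manin's conjecture and arguably deeper structurally.

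The middle section of your write-up --- $p$-adic Bombieri--Pila interpolation producing a non-constant $p$-adic analytic map and a ``logarithmic Cherry theorem'' extracting a rational curve --- is speculative, and you say so. It is not an argument in the paper and there is currently no such theorem to invoke, so it should not be presented as part of a proof sketch but rather as a research direction. With that caveat, your treatment of the statement is appropriate: you prove what is provable (maximality), correctly isolate the unproven core (xericity of the complement), and accurately locate the conditional hypotheses under which it would follow.
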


It turns out that this conjecture follows from Vojta's conjecture on proximity functions, and a conjecture in the Minimal Model Program, see the work of McKinnon \cite{McKinnon}.

By lack of evidence, we do not make a conjecture regarding the $p$-adic analytic counterpart of the previous conjecture. Nevertheless, it is tempting to ask the following:

\begin{question} Let $p$ be a prime and let $Y$ be a smooth projective variety over $\C_p$. Let $Z_{rc}(Y)$ be the Zariski closure of the union of all possibly singular rational curves in $Y$, and let $Z_{an}(Y)$ be the Zariski closure of the union of the images of all non-constant $p$-adic analytic maps $f:\C_p\to Y$. Note that $Z_{rc}(Y)\subseteq Z_{an}(Y)$. When do we have equality? 
\end{question}

\subsection{Structure of the paper} \label{SecOutline} First, in Section \ref{SecEtale} we show that the Main Conjecture \ref{MainConj} is compatible with \'etale covers. As a first application of this compatibility, in Section \ref{SecNefT} we will prove the Main Conjecture for all varieties with nef tangent bundle. 

Going to the dual situation, in Section \ref{SecCot} we study the Main Conjecture under various positivity assumptions on the cotangent bundle. For instance, we show that if $X$ is a projective $p$-adic manifold with semi-ample cotangent bundle, then every $p$-adic analytic map $f:\C_p\to X$ is constant. 

In Section \ref{SecSurf} we study the existence of a non-empty xeric open set on surfaces of general type having some irregular \'etale cover. One can easily show that such surfaces have at most finitely many rational curves, and thus, a xeric dense Zariski open set is expected from Conjecture \ref{ConjLargestXeric}. (Here we do not rely on the Bombieri--Lang conjecture for varieties of general type.)

In Section \ref{SecDiag} we consider varieties $X$ with the property that the diagonal $\Delta\subseteq X\times X$ has ``positivity tending to $0$ along \'etale covers of $X$'' ---this will be defined in a precise way by means of a certain geometric invariant $\widehat{s}(X,\Acal)$. The main result of this section is that when $\widehat{s}(X,\Acal)$ vanishes, the variety $X$ is xeric. The results in this section build on work by Tanimoto \cite{Tanimoto}.

Finally, in Section \ref{SecBall} we prove Conjecture \ref{MainConj} for compact ball quotients. The arithmetic aspect follows from work of Brunebarbe--Maculan \cite{BrunebarbeMaculan} and we also provide an alternative proof using our results on $\widehat{s}(X,\Acal)$.

\subsection{Some final questions} As the reader will notice, all examples of xeric varieties that we will discuss (either in the literature or proved here) require infinite algebraic fundamental group. 

\begin{question} Can one produce an example of a positive dimensional xeric variety with finite algebraic fundamental group?
\end{question}

In the analytic setting, William Cherry has conjectured that there is no $p$-adic analytic map $f:\C_p\to X$ to a $K3$ surface having Zariski dense image. We would like to propose the following stronger conjecture:

\begin{conjecture} Let $X/\C_p$ be a smooth projective variety. If there is a $p$-adic analytic map $f:\C_p\to X^\an$ with Zariski dense image, then the Kodaira dimension of $X$ is $\kappa(X)=-\infty$.
\end{conjecture}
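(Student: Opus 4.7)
Proof proposal: The approach is to argue by contradiction, assuming $\kappa(X)\geq 0$ and ruling out the existence of $f:\C_p\to X^{\an}$ with Zariski dense image. The first step is to attempt a reduction to the case where $X$ is of general type via the Iitaka fibration $\pi:X\dashrightarrow Y$ (after passing to a smooth projective birational model), which gives $\dim Y=\kappa(X)$ and such that $\pi\circ f:\C_p\to Y^{\an}$ has Zariski dense image. The base $Y$ of the Iitaka fibration is not in general of general type (e.g.\ for elliptic surfaces of $\kappa=1$ over $\Pro^1$), so the reduction is delicate: one would need to invoke the canonical bundle formula and base change by suitable finite covers of $Y$ to extract a variety of general type (or a variety to which the results of the earlier sections apply). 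Assuming such a reduction, the problem splits into two core cases: (A) $\kappa(X)=0$, and (B) $X$ is of general type.

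For (B), I would combine the cotangent positivity results of Section \ref{SecCot} with the \'etale compatibility of Section \ref{SecEtale}: if after a finite \'etale cover $X$ acquires a semi-ample cotangent bundle, then Section \ref{SecCot} excludes any non-constant $p$-adic entire curve, a fortiori any with Zariski dense image. More flexibly, showing that the invariant $\widehat{s}(X,\Acal)$ of Section \ref{SecDiag} vanishes for varieties of general type (perhaps after a well-chosen \'etale tower, in the spirit of the ball quotient argument of Section \ref{SecBall}) would force $f$ to be constant. For (A), the target includes abelian varieties (handled by Cherry), as well as K3 and hyperk\"ahler manifolds. Here I would exploit a non-vanishing pluricanonical form $\omega\in H^0(X,K_X^{\otimes m})$ together with the $p$-adic derivatives of $f$, via a jet-bundle construction, to produce an obstruction; the prototype is the abelian variety case, where Cherry's argument proceeds through the Abel--Jacobi map and growth estimates for entire $p$-adic functions.

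The main obstacle is case (B), which is a $p$-adic analog of a strong form of the Green--Griffiths conjecture and is open even for surfaces of general type. A reasonable intermediate milestone is the case of surfaces of general type with $c_1^2>c_2$: there the abundance of symmetric differentials (by Riemann--Roch together with Bogomolov's theorem) should enable the Section \ref{SecCot} framework to apply after an \'etale cover, reducing the remaining difficulty to case (A), which itself contains Cherry's still-open conjecture for K3 surfaces.
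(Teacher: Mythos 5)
This statement is a \emph{conjecture} in the paper, not a theorem: the authors propose it (as a strengthening of Cherry's still-open conjecture that K3 surfaces over $\C_p$ admit no Zariski-dense $p$-adic entire curves) and offer no proof. So there is no argument in the paper to compare yours against. Your write-up is a roadmap for an open problem rather than a proof, and you say as much yourself --- the general type case is a $p$-adic Green--Griffiths-type statement that is wide open even for surfaces, and the $\kappa(X)=0$ case beyond abelian varieties (K3, hyperk\"ahler) is exactly Cherry's open conjecture.

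Two concrete inaccuracies in the strategy are worth flagging. First, the suggestion that showing $\widehat{s}(X,\Acal)=0$ "would force $f$ to be constant" misreads the role of that invariant: $\widehat{s}$ is defined over number fields and Corollary \ref{CorosetaleXeric} only yields \emph{xericity} (a point-counting statement over $k$), not $p$-adic Brody hyperbolicity over $\C_p$. In the ball quotient case the paper deduces $p$-adic Brody hyperbolicity from ampleness, hence semiampleness, of $\Omega^1_X$ via Lemma \ref{LemmaPAdicSemiample}; the $\widehat{s}=0$ computation is used only for the arithmetic part (i). Second, the Iitaka reduction is not merely "delicate": $\pi:X\dashrightarrow Y$ is only a rational map, so $\pi\circ f$ need not be defined on all of $\C_p$ (a Zariski-dense $f$ will in general meet the indeterminacy locus), and even after resolving, the base $Y$ has $\kappa(Y)\ge 0$ but need not be of general type, so one does not obtain a clean split into your cases (A) and (B) without additional input --- a canonical bundle formula plus $p$-adic analogues of covering arguments that are not established here. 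The one piece of your plan that the paper does support is the use of Lemma \ref{LemmaPAdicSemiample} (via symmetric differentials and the $p$-adic logarithmic derivative lemma) after passing to \'etale covers, which is indeed the mechanism used in Section \ref{SecBall}, but the paper makes no claim that this extends to all varieties with $\kappa(X)\ge 0$.
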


Related to the previous conjecture, Daniel Litt asked us the following:

\begin{question} Let $X/\C_p$ be a smooth projective variety admitting a $p$-adic analytic map $f:\C_p\to X^\an$ with Zariski dense image. Is $X$ unirational?
\end{question}

Finally, we mention a more open-ended question asked to us by Jordan Ellenberg:

\begin{question} How does this theory of xeric varieties look like if instead of rational points we consider integral points with respect to divisors? Do we expect a geometric and a $p$-adic analytic conjectural characterization?
\end{question}


\section{\'Etale covers} \label{SecEtale}

In this section we study the compatibility of the different items of Conjecture \ref{MainConj} with \'etale covers.

First we consider the case of rational curves. The following lemma directly follows from the fact that $\Pro^1_{\C}$ is simply connected.

\begin{lemma}[Rational curves and \'etale covers]\label{LemmaRatCurvesEtale} Let $f:Y\to X$ a finite \'etale morphism of smooth projective varieties over an algebraically closed field $K$ of characteristic $0$. Then:
\begin{itemize}
\item[(i)] If $D\subseteq Y$ is a rational curve, then $f(D)$ is a rational curve in $X$.
\item[(ii)] If $C\subseteq X$ is a rational curve, then $f^{-1}(C)$ is a union of rational curves in $Y$. 
\end{itemize}
\end{lemma}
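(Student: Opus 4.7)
For part (i) only the finiteness of $f$ is really needed. A rational curve $D\subseteq Y$ is, by definition, the image of a non-constant morphism $\varphi\colon\Pro^1\to Y$; composing with $f$ gives $f\circ\varphi\colon\Pro^1\to X$, whose closed image is $f(D)$. Since $f$ is finite, $\dim f(D)=\dim D=1$, so $f(D)$ is a curve, and being the image of $\Pro^1$ it is rational.

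For part (ii) the étale hypothesis enters through the simple connectedness of $\Pro^1_K$. Let $\nu\colon\Pro^1\to C\hookrightarrow X$ be the normalization of $C$ followed by the inclusion, and form the fibre product $Z:=Y\times_X\Pro^1$ with projections $p\colon Z\to Y$ and $q\colon Z\to\Pro^1$. Since $f$ is finite étale, so is $q$. In characteristic zero, $\Pro^1_K$ is simply connected, so every connected component of $Z$ maps isomorphically to $\Pro^1$ under $q$; thus $Z$ is the trivial cover
\[
Z=\bigsqcup_{i=1}^{d} L_i, \qquad q|_{L_i}\colon L_i\xrightarrow{\ \sim\ }\Pro^1, \qquad d=\deg f.
\]

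Set $B_i:=p(L_i)\subseteq Y$. The identity $f\circ p=\nu\circ q$ shows $B_i\subseteq f^{-1}(C)$ and that, after identifying $L_i\cong\Pro^1$ via $q|_{L_i}$, the composite $L_i\to Y\to X$ equals $\nu$ and is therefore non-constant; consequently $p|_{L_i}$ is non-constant, so $B_i$ is one-dimensional and is a rational curve in $Y$. Conversely, given $y\in f^{-1}(C)$ surjectivity of $\nu$ yields $t\in\Pro^1$ with $\nu(t)=f(y)$; then $(y,t)\in Z$ lies in some $L_i$, so $y\in B_i$. Hence $f^{-1}(C)=B_1\cup\cdots\cup B_d$ set-theoretically, and since each $B_i$ is a closed irreducible curve, every irreducible component of $f^{-1}(C)$ is one of the $B_i$ and is in particular a rational curve. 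The only non-formal input is the trivialization of the étale cover $q\colon Z\to\Pro^1$, which is precisely why étaleness (rather than mere finiteness) is indispensable in~(ii).
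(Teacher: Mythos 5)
Your proof is correct and makes precise exactly the argument the paper has in mind: the authors simply assert that the lemma ``directly follows from the fact that $\Pro^1_{\C}$ is simply connected,'' and your pullback of the \'etale cover along the normalization $\nu\colon\Pro^1\to C$, trivialized by simple connectedness of $\Pro^1_K$, is the standard way to carry that out. Part (i) indeed only uses quasi-finiteness (to prevent $f$ from contracting $D$), as you note.
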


Let us now discuss the case of $p$-adic analytic maps.

\begin{lemma}[$p$-adic maps and \'etale covers] Let $p$ be a prime. Let $\pi:Y\to X$ be a finite \'etale morphism of smooth projective varieties over $\C_p$ and let $f:\C_p\to X^{\an}$ be a $p$-adic analytic map. Then there is a $p$-adic analytic map $\widetilde{f}:\C_p\to Y^{\an}$ such that $f=\pi^{\an}\circ \widetilde{f}$.
\end{lemma}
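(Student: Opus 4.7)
The plan is to form the pullback of $\pi^{\an}$ along $f$ and lift $f$ through the resulting finite \'etale cover, exploiting the simple connectedness of the Berkovich analytic affine line over $\C_p$.

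First I would form the Cartesian square of $\C_p$-analytic spaces by setting $Z := (\A^1_{\C_p})^{\an}\times_{X^{\an}} Y^{\an}$, with projections $\pi':Z\to (\A^1_{\C_p})^{\an}$ and $\rho:Z\to Y^{\an}$ satisfying $\pi^{\an}\circ \rho = f\circ \pi'$. Since finite \'etale morphisms are preserved by analytification and by base change, $\pi'$ is itself finite \'etale. The crucial claim is that $\pi'$ is a trivial cover, i.e., $Z$ decomposes as a disjoint union of copies of $(\A^1_{\C_p})^{\an}$ each mapping isomorphically to the base via $\pi'$. Granted this, I would pick any connected component $Z_0\subseteq Z$ and define $\widetilde{f}:=\rho\circ (\pi'|_{Z_0})^{-1}$. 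A direct computation then gives
\begin{equation*}
\pi^{\an}\circ \widetilde{f} = \pi^{\an}\circ \rho\circ (\pi'|_{Z_0})^{-1} = f\circ \pi'\circ (\pi'|_{Z_0})^{-1} = f,
\end{equation*}
as required.

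The triviality of $\pi'$ is the main obstacle, as it is equivalent to the vanishing of the \'etale fundamental group of $(\A^1_{\C_p})^{\an}$. In characteristic zero this is standard and can be obtained in two ways. One may appeal to Berkovich's theorem that $(\A^1_{\C_p})^{\an}$ is contractible, together with a comparison between the topological and \'etale fundamental groups for finite covers on smooth Berkovich spaces, so that topological simple connectivity forces every finite \'etale cover to be trivial. Alternatively, one may descend the analytic cover to a finite \'etale cover of the algebraic $\A^1_{\C_p}$ via a non-archimedean GAGA-style result and then use that the algebraic \'etale fundamental group of $\A^1_{\C_p}$ vanishes in characteristic zero. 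The rest of the argument is formal and is the natural $p$-adic analog of the classical complex lifting lemma, where a holomorphic map from the simply connected space $\C$ to a complex manifold lifts through any finite \'etale cover.
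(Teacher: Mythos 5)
Your argument has exactly the same skeleton as the paper's: form the fibre product $Z=(\A^1_{\C_p})^{\an}\times_{X^{\an}}Y^{\an}$, observe that the projection $\pi'$ to $(\A^1)^{\an}$ is finite \'etale (analytification and base change preserve this), conclude that $Z$ splits as a disjoint union of sections, and compose $\rho$ with the inverse of $\pi'$ on a component. So structurally you are doing what the paper does. The difference is entirely in how you justify the pivotal claim that $(\A^1_{\C_p})^{\an}$ has no nontrivial finite \'etale covers, and here your proposed justifications are not sound as stated. The paper invokes the Gabber--L\"utkebohmert theorem (Theorem 3.1 of \cite{Lut}, L\"utkebohmert's ``Riemann's existence problem for a $p$-adic field''), which is exactly the deep input needed.

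Your first route---Berkovich contractibility of $(\A^1_{\C_p})^{\an}$ together with ``a comparison between the topological and \'etale fundamental groups''---does not work: in the non-Archimedean world, topological simple connectivity of a Berkovich space does \emph{not} force finite \'etale covers to be trivial. For example, affinoid disks over $\C_p$ are contractible as topological spaces but admit plenty of nontrivial finite \'etale covers (in residue characteristic $p$ this is precisely where the subtlety lies, cf.\ de~Jong's theory of covering spaces, where finite \'etale covers are governed by a group strictly larger than the topological $\pi_1$). The triviality of finite \'etale covers of the \emph{full} affine line is a genuinely hard theorem, not a corollary of contractibility. Your second route---``descend the analytic cover to an algebraic one via a non-archimedean GAGA-style result''---is closer to the truth, but GAGA in the usual sense applies only to proper varieties; the statement that finite \'etale covers of $(\A^1_{\C_p})^{\an}$ are algebraic is precisely the $p$-adic Riemann existence theorem, i.e.\ Gabber--L\"utkebohmert again, so this is not a shortcut around it. The fix is simply to cite \cite{Lut} (Theorem 3.1) directly at this step, as the paper does; the rest of your argument is fine.
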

\begin{proof} By general properties of the analytification functor, the map $\pi^{\an}:Y^{\an}\to X^{\an}$ is finite \'etale. Let $Z=(\A^1)^{\an}\times_{X^{\an}}Y^{\an}$ and consider the associated maps $g:Z\to Y^{\an}$ and $\nu:Z\to (\A^1)^{\an}$. Since analytic base change preserves the property of being finite \'etale, the map $\nu$ is finite \'etale. As $\C_p$ has characteristic $0$, the affine line $\A^1$ admits no non-trivial finite connected \'etale covers (in the category of varieties), and by the Gabber--L\"utkebohmert theorem (cf. Theorem 3.1 in \cite{Lut}) the same holds in the analytic category for $(\A^1)^{\an}$. Thus, $Z$ consists of finitely many disjoint copies of $(\A^1)^{\an}$ mapping isomorphically to $(\A^1)^{\an}$ via $\nu$, and the result follows by restricting $g$ to any of those components of $Z$.
\end{proof}

We remark that a similar application of \cite{Lut} can be found in \cite{JavVez}.

Let us now discuss the compatibility property of being xeric with morphisms. First we have:

\begin{lemma}[Xericity and morphisms]\label{LemmaXericMorphisms} Let $f:Y\to X$ be a  morphism of projective geometrically irreducible varieties over $k$. Let $U\subseteq X$ be an open set and let $V=f^{-1}(U)$. We assume that $f|_V:V\to U$ is finite.
\begin{itemize}
\item[(i)] If $U$ is xeric, so is $V$;
\item[(ii)] If $f$ is finite \'etale over all of $X$, and if $V$ is xeric, so is $U$.
\end{itemize}
\end{lemma}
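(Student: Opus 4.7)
My plan is to handle the two parts separately, using standard height machinery for (i) and the Chevalley--Weil theorem for (ii).

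For part (i), fix any ample line sheaf $\Bcal$ on $Y$ and any ample line sheaf $\Acal$ on $X$. Since $V(L)$ being sparse is independent of the choice of ample line sheaf, it suffices to bound $N_{\Bcal}(V,L,T)$. The first observation is the standard height comparison: because $\Bcal$ is ample on $Y$, there exists an integer $m\ge 1$ such that $\Bcal^{\otimes m}\otimes (f^*\Acal)^{\vee}$ is globally generated on $Y$, which yields
$$
H_{f^*\Acal}(y)\ll H_{\Bcal}(y)^m\quad\text{for all }y\in Y(k^{\alg}).
$$
Combining this with $H_{f^*\Acal}(y)=H_{\Acal}(f(y))\cdot\exp(O(1))$ and the assumption that $f|_V\colon V\to U$ is finite (so each fiber has at most $\deg(f|_V)$ geometric points), I obtain
$$
N_{\Bcal}(V,L,T)\le \deg(f|_V)\cdot N_{\Acal}(U,L,c\,T^m)\ll_{\epsilon} T^{m\epsilon}
$$
for any $\epsilon>0$, which is exactly what is needed since $\epsilon$ can be taken arbitrarily small.

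For part (ii), the main tool is the Chevalley--Weil theorem: since $f\colon Y\to X$ is finite \'etale between projective varieties over a number field, for every finite extension $L/k$ there exists a finite extension $L'/L$ with $f^{-1}(X(L))\subseteq Y(L')$. Fix an ample $\Acal$ on $X$; because $f$ is finite, $f^*\Acal$ is ample on $Y$, so xericity of $V$ gives $N_{f^*\Acal}(V,L',T)\ll_{\epsilon}T^{\epsilon}$. Each $u\in U(L)$ has exactly $d=\deg f$ preimages in $V(L')$, each with height equal to $H_{\Acal}(u)\cdot \exp(O(1))$. Summing over preimages yields
$$
d\cdot N_{\Acal}(U,L,T)\le N_{f^*\Acal}(V,L',c\,T)\ll_{\epsilon}T^{\epsilon},
$$
and dividing by $d$ gives sparsity of $U(L)$ for every finite $L/k$, proving $U$ is xeric.

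The main obstacle is part (ii): the points of $U(L)$ may not lift to $L$-points of $V$, so one genuinely needs an input that controls the field of definition of preimages, and Chevalley--Weil is exactly the right black box in the \'etale setting. In (i) the only subtlety is that $f^*\Acal$ is not assumed ample on $Y$, but this is bypassed by the global generation trick together with the finiteness of $f|_V$, which guarantees bounded fiber size on $L$-points.
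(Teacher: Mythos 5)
Your proof is correct and follows essentially the same route as the paper's: for (i) one controls fibre sizes under the finite map $f|_V$ and compares ample heights, and for (ii) one invokes Chevalley--Weil to lift $L$-points to a fixed finite extension and then uses xericity of $V$. The only cosmetic difference is in (i), where you compare $f^*\Acal$ against a power of an arbitrary ample $\Bcal$ on $Y$ via the global-generation trick, whereas the paper takes $\Bcal = f^*\Acal$ directly and dominates it by an auxiliary ample $\Bcal'$; both variants work because the $\ll_\epsilon T^\epsilon$ bound is insensitive to replacing $T$ by a fixed power of $T$.
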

\begin{proof} Let $\Acal$ be an ample line sheaf on $X$ and define $\Bcal=f^*\Acal$. We count rational points of bounded height with respect to these sheaves, and we can adjust the normalizations so that $H_\Acal(f(x)) = H_\Bcal(x)$ for all $x\in Y(k^\alg)$.

Let us consider (i). Choose an ample line sheaf $\Bcal'$ such that $\Bcal'\otimes\Bcal^{\vee}$ is ample and adjust heights such that  $H_\Bcal(x)\le H_{\Bcal'}(x)$ for all $x\in Y(k^{\alg})$. Recall that $U$ is xeric. Then for every finite extension $L/k$ we have
$$
N_{\Bcal'} (V,L,T)\le N_{\Bcal} (V,L,T) \le (\deg f) N_\Acal(U,L,T)\ll_\epsilon T^\epsilon \quad \mbox{ for all }\epsilon>0
$$
where $\deg(f)$ is the maximal cardinality of a geometric fibre of $f$ over $U$. This is due to the fact that $f(x)\in X(L)$ for $x\in Y(L)$, and we use $H_\Acal(f(x)) = H_\Bcal(x)\le H_{\Bcal'}(x)$.

Let us now consider (ii). Note that now $\Bcal$ is an ample line sheaf on $Y$ because $f$ is finite. Recall that $V$ is xeric and take $L/k$ finite. As $f$ is finite \'etale, by the  Chevalley--Weil theorem there is a finite extension $K/L$ such that $U(L)\subseteq f(V(K))$ and we find
$$
N_{\Acal} (U,L,T) \le  N_\Bcal(V,K,T)\ll_\epsilon T^{\epsilon}  \quad \mbox{ for all }\epsilon>0.
$$
\end{proof}

The previous lemma can be combined with the next observation in applications:

\begin{lemma}[Xericity and extension of scalars]\label{LemmaBaseChange} Let $X$ be a smooth projective geometrically irreducible variety over $k$. Let $U\subseteq X$ be a Zariski open set. Let $K/k$ be a finite extension. We have that $U$ is xeric if and only if $U_K$ is xeric. 
\end{lemma}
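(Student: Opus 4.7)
The plan is to exploit two elementary facts: first, that the height $H_\Acal$ is normalized to $\Q$ and is therefore absolute (independent of which ground field $k$ or $K$ we view a point over); second, that finite extensions transfer well between $k$ and $K$ in both directions.

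For the direction \emph{$U$ xeric over $k$ implies $U_K$ xeric over $K$}, I would fix an ample line sheaf $\Acal$ on $X$, so that $\Acal_K$ is ample on $X_K$. Given any finite extension $M/K$, the extension $M/k$ is also finite. The canonical identification $U_K(M)=U(M)$ preserves heights, because the $\Q$-normalized heights $H_{\Acal_K}$ and $H_{\Acal}$ agree on this common set of points. Consequently
\[
N_{\Acal_K}(U_K,M,T)=N_{\Acal}(U,M,T)\ll_\epsilon T^\epsilon,
\]
using that $U$ is xeric over $k$ applied to the finite extension $M/k$.

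For the converse direction \emph{$U_K$ xeric over $K$ implies $U$ xeric over $k$}, I would take an arbitrary finite extension $L/k$ and choose a compositum $LK$ of $L$ and $K$ inside a fixed algebraic closure of $k$. Then $LK/K$ is a finite extension, and $U(L)\subseteq U(LK)=U_K(LK)$. Since heights are preserved under these inclusions and base changes, I obtain
\[
N_{\Acal}(U,L,T)\le N_{\Acal_K}(U_K,LK,T)\ll_\epsilon T^\epsilon,
\]
as $U_K$ is xeric over $K$ applied to the finite extension $LK/K$.

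There is no real obstacle here; the only thing to verify carefully is the compatibility of height normalizations under base change, which is immediate from the convention that $H_\Acal$ is normalized to $\Q$ (so that the same absolute height is obtained regardless of whether the points are viewed over $k$ or over $K$).
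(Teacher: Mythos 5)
Your proof is correct, and since the paper states this lemma without proof, there is no alternative argument to compare against; the two observations you isolate (absolute heights via $\Q$-normalization, and the transfer of finite extensions via restriction and compositum) are precisely what the authors implicitly rely on. The only caveat worth making explicit is that $H_{\Acal}$ and $H_{\Acal_K}$ agree only up to $\exp(O(1))$, but this plays no role since it can only shift the counting function by a bounded multiplicative factor in $T$, which is absorbed into the $\ll_\epsilon$ constant.
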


The following more technical result does not immediately relate to the property of being xeric, but it will be used in establishing xericity in some cases. See Lemma 4.1 and Proposition 4.4 in \cite{HarariSkorobogatov} (see also Theorem 8.4.6 in \cite{Poonen}).

\begin{lemma}[Rational points and \'etale covers]\label{LemmaRatPtsEtaleTwists}  Let $f: Y\to X$ be a finite \'etale Galois morphism of projective geometrically irreducible varieties over $k$. There are projective geometrically irreducible varieties $Y_1,...,Y_m$ defined over $k$ along with morphisms $f_i : Y_i\to X$ defined over $k$, such that:
\begin{itemize}
\item[(i)] Each $f_i : Y_i\to X$ is a twist of $f : Y\to X$, and
\item[(ii)] $X(k)=\bigcup_{i=1}^m f_i(Y_i(k))$.
\end{itemize}
\end{lemma}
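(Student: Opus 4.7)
The plan is to apply the Chevalley--Weil descent technique, viewing geometric fibres of $f$ as torsors. Set $G=\Aut(Y/X)$, the finite group scheme over $k$ of deck transformations, which by the Galois hypothesis acts simply transitively on every geometric fibre of $f$. For each $x\in X(k)$, the fibre $Y_x=f^{-1}(x)$ is then a $G$-torsor over $k$, and Galois descent attaches to it a class $\delta(x)\in H^1(k,G)$ (understood as a pointed set when $G$ is non-abelian). The statement (ii) will be deduced by decomposing $X(k)$ according to the value of $\delta$ and then trivializing each piece by twisting.

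The first key step is to prove that $\delta(X(k))\subseteq H^1(k,G)$ is finite. This is the content of the classical Chevalley--Weil theorem: spread $f$ to a finite \'etale morphism of integral models over $\Spec\Ocal_k[1/S]$ for a suitable finite set of primes $S$ of $k$. Since $X$ is projective, every $x\in X(k)$ extends to an $\Ocal_k[1/S]$-point via the valuative criterion of properness, and so $Y_x$ spreads to a finite \'etale $\Ocal_k[1/S]$-algebra of rank $|G|$. Hermite's theorem on extensions of bounded degree unramified outside $S$ then leaves only finitely many isomorphism classes, which gives the required finiteness of the image of $\delta$.

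Second, choose cocycles $\tau_1,\ldots,\tau_m$ representing the image of $\delta$. For each $i$ I form the inner twist $f_i:Y_i\to X$ of $f:Y\to X$ by $\tau_i$ via Galois descent. Each $Y_i$ is a projective $k$-variety and $f_i$ is again a finite \'etale Galois morphism; moreover $Y_i\otimes_k k^{\alg}\cong Y\otimes_k k^{\alg}$, so $Y_i$ is geometrically irreducible by the hypothesis on $Y$. This gives (i). The defining property of the twist is that, for $x\in X(k)$, the $k$-torsor $f_i^{-1}(x)$ becomes trivial (i.e. acquires a $k$-rational point) precisely when $\delta(x)=[\tau_i]$, equivalently $x\in f_i(Y_i(k))\iff \delta(x)=[\tau_i]$. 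Ranging over $i$ produces the desired covering $X(k)=\bigcup_{i=1}^m f_i(Y_i(k))$.

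The step I expect to be the most delicate is the construction of each twist $Y_i$ as a projective $k$-scheme and the verification of its torsor-trivialization property: when $G$ is non-abelian, $H^1(k,G)$ is merely a pointed set, and the inner twisting has to be formulated in terms of Galois-semi-linear $1$-cocycles acting on $Y\otimes_k k^{\alg}$ compatibly with the trivial action on $X\otimes_k k^{\alg}$. This is standard but requires care; once the descent formalism is set up, projectivity, geometric irreducibility, and the rationality criterion all follow formally from the fact that $Y_i$ is geometrically isomorphic to $Y$.
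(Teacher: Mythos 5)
Your proposal is correct and follows exactly the standard torsor-twisting argument: view $f:Y\to X$ as a torsor under the finite étale group scheme $G=\Aut(Y/X)$, use Chevalley--Weil (spreading out plus Hermite) to show the image of the evaluation map $\delta:X(k)\to H^1(k,G)$ is finite, and twist by cocycle representatives of that image. The paper does not give its own proof but cites Harari--Skorobogatov and Poonen (Theorem 8.4.6), and those references prove the lemma by precisely this method, so your argument is essentially the same as the one the paper relies on.
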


We will often have, a priori, an \'etale cover only defined after extending scalars to $k^\alg$. Thus, the next lemma will be useful in order to descend the situation to $k$ before using the previous lemma. See (the proof of) Claim 4.1 in \cite{BrunebarbeMaculan}\footnote{The discussion in \url{https://mathoverflow.net/questions/364278} is also informative.}.

\begin{lemma}[Producing Galois \'etale covers over $k$] \label{LemmaEtalek} Let $X$ be a projective geometrically irreducible variety defined over $k$ having $X(k)\ne\emptyset$. Let $\pi: X'\to X_{k^{\alg}}$ be a finite \'etale morphism with $X'$ a projective irreducible variety over $k^{\alg}$. There is a projective geometrically irreducible variety $Y$ over $k$ with a finite \'etale Galois morphism $Y\to X$ defined over $k$ such that the induced map $Y_{k^{\alg}}\to X_{k^{\alg}}$ factors through $\pi:X'\to X_{k^{\alg}}$.
\end{lemma}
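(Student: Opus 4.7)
The plan is to work at the level of étale fundamental groups, using the $k$-rational point to rigidify, and then descend. Fix $x_0 \in X(k)$ with associated geometric point $\bar x_0$; the homotopy exact sequence
\[
1 \to G \to \pi_1^{\mathrm{et}}(X,\bar x_0) \to \Gal(k^{\alg}/k) \to 1, \qquad G := \pi_1^{\mathrm{et}}(X_{k^{\alg}},\bar x_0),
\]
splits via a section $s$ induced by $x_0$, identifying $\pi_1^{\mathrm{et}}(X,\bar x_0) = G \rtimes \Gal(k^{\alg}/k)$ and equipping $G$ with a continuous action of $\Gal(k^{\alg}/k)$. After choosing a geometric point of $X'$ above $\bar x_0$, the connected cover $\pi$ corresponds to an open subgroup $H \le G$.

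Next I would set
\[
M \;=\; \bigcap_{g \in G,\; \gamma \in \Gal(k^{\alg}/k)} \gamma\bigl(g H g^{-1}\bigr),
\]
and verify that $M$ is an open subgroup of $G$ which is normal in $G$, $\Gal(k^{\alg}/k)$-stable, and contained in $H$. The inclusion $M \subseteq H$ is immediate (take $g = e$, $\gamma = \mathrm{id}$); normality and $\Gal$-stability follow by reindexing the pair $(g,\gamma)$ in the intersection. Openness is the key point: since $X$ is projective, geometrically connected, and $k$ has characteristic zero, $G$ is topologically finitely generated (Grothendieck, SGA1), so $G$ admits only finitely many open subgroups of any given finite index. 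Hence the conjugates $\gamma(gHg^{-1})$, all of index $[G:H]$, form a finite family, and $M$ is a finite intersection of open subgroups.

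Finally, $M$ defines a connected finite étale Galois cover $\tilde Y \to X_{k^{\alg}}$ with Galois group $G/M$, and the distinguished basepoint $eM \in G/M$ above $\bar x_0$ rigidifies $\tilde Y$, yielding a canonical action of $\Gal(k^{\alg}/k)$ on $\tilde Y$ covering its action on $X_{k^{\alg}}$. All the data (the finitely many subgroups $\gamma(gHg^{-1})$, the cover, the group action) are already defined over some finite Galois extension $k'/k$, so standard Galois descent for projective schemes along $k'/k$ produces a projective $k$-variety $Y$ with $Y \otimes_k k^{\alg} = \tilde Y$ and a finite étale morphism $Y \to X$. The $\Gal(k^{\alg}/k)$-stability of $M$ makes $G/M$ into a finite étale $k$-group scheme $\Gamma$ acting on $Y$ over $X$ and realizing $Y \to X$ as a $\Gamma$-torsor, i.e.\ as an étale Galois cover over $k$. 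By construction, $Y$ is geometrically irreducible (since $\tilde Y$ is connected) and $Y_{k^{\alg}} \to X_{k^{\alg}}$ factors through $\pi$ because $M \subseteq H$.

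The main technical obstacles I expect are the two standard facts being invoked: topological finite generation of $G$, which forces $M$ to be open (this requires $X$ proper and $k$ of characteristic zero, and fails in general otherwise), and the effectiveness of Galois descent for the cover $\tilde Y \to X_{k^{\alg}}$, which is handled cleanly by spreading out to a finite extension $k'/k$ where the rigidifying basepoint turns the $\Gal(k^{\alg}/k)$-action into a finite Galois descent datum.
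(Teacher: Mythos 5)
Your proof is correct and follows the same fundamental-group approach that the paper defers to (Brunebarbe--Maculan, proof of Claim 4.1, and the cited MathOverflow thread): split the homotopy exact sequence via the point $x_0\in X(k)$, pass to the normal core $M$ of $H$ in $\pi_1^{\mathrm{et}}(X,\bar x_0)$, and use topological finite generation of $\pi_1^{\mathrm{et}}(X_{k^{\alg}})$ (valid since $X$ is proper and $k$ has characteristic zero) to see that $M$ is open. One remark on interpretation: the $Y\to X$ you produce is ``Galois'' in the sense of being a torsor under a finite \'etale $k$-group scheme (equivalently, $Y_{k^{\alg}}\to X_{k^{\alg}}$ is Galois), which is exactly the notion the paper needs, given that Lemma~\ref{LemmaRatPtsEtaleTwists} is cited from Harari--Skorobogatov and Poonen Theorem~8.4.6, both stated for torsors; and if you prefer to avoid the explicit descent step, you can take the open subgroup $M\rtimes\Gal(k^{\alg}/k)\le\pi_1^{\mathrm{et}}(X,\bar x_0)$ and read off $Y$ directly from the Galois correspondence over $k$.
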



\section{Varieties with nef tangent bundle} \label{SecNefT}

The goal of this section is to prove Conjecture \ref{MainConj} when the tangent bundle is nef. Here we recall that for a smooth projective variety $X$ over a field $K$ and for a vector bundle $\Ecal$ on $X$, we say that $\Ecal$ is nef, ample, big, semiample, etc. if the line sheaf $\Ocal_P(1)$ on the projective bundle $P=\Proj_X(\Ecal)$ has the corresponding property.

As a warm-up result, let us consider the case of abelian varieties (already discussed in the introduction):
\begin{lemma}\label{LemmaAbelian} Let $A$ be an abelian variety over $k$. Then $A$ is xeric, $A_{k^\alg}$ contains no rational curves, and for every prime $p$ and every embedding $\sigma:k\to \C_p$ we have that $A_{\sigma}^\an$ is Brody hyperbolic.
\end{lemma}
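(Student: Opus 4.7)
The plan is to treat the three assertions separately; each follows from a standard result cited earlier in the paper, so the proof mostly amounts to assembling the right inputs rather than doing new work.

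For xericity, I would fix a finite extension $L/k$ and a symmetric ample line sheaf $\Acal$ on $A$. By the Mordell--Weil theorem, $A(L)$ is finitely generated of some rank $r$. The N\'eron--Tate height $\hat{h}_\Acal$ associated to $\Acal$ is a positive-definite quadratic form on $(A(L)/A(L)_{\rm tors})\otimes\R$, and it satisfies $\log H_\Acal = \hat{h}_\Acal + O(1)$ on $A(L)$. Counting $L$-rational points with $H_\Acal \le T$ then reduces to counting lattice points in a Euclidean ball of squared radius $\log T + O(1)$ in $\R^r$, yielding a bound of order $(\log T)^{r/2}$. This is $\ll_\epsilon T^\epsilon$ for every $\epsilon>0$, which shows that $A(L)$ is sparse in $A$; since $L$ was arbitrary, $A$ is xeric.

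For the absence of rational curves on $A_{k^\alg}$, I would argue that every morphism $\varphi: \Pro^1_{k^\alg} \to A_{k^\alg}$ is constant. The cleanest route is to note that such a $\varphi$ factors through the Albanese variety of $\Pro^1_{k^\alg}$, which is a point. Equivalently, translation-invariant global $1$-forms trivialize $\Omega^1_A$, and each of them pulls back to zero on $\Pro^1_{k^\alg}$ since $H^0(\Pro^1,\Omega^1)=0$; hence $d\varphi \equiv 0$, forcing $\varphi$ to be constant. So no rational curve can sit inside $A_{k^\alg}$.

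For $p$-adic Brody hyperbolicity, I do not expect to do anything new: this is precisely Cherry's theorem \cite{Cherry} recalled in the introduction, which asserts that every $p$-adic analytic map $\C_p \to B^\an$ to an abelian variety $B$ over $\C_p$ is constant. Applying it to $B = A\otimes_\sigma \C_p$ gives exactly the statement. In summary, no genuine obstacle is anticipated; the substance of the lemma is the organization of three classical inputs, namely Mordell--Weil combined with the quadratic growth of the N\'eron--Tate height, the Albanese obstruction for maps out of $\Pro^1$, and Cherry's theorem.
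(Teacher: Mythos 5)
Your proposal is correct and follows the same route the paper takes: Mordell--Weil together with the quadratic growth of the N\'eron--Tate height for xericity, and Cherry's theorem for $p$-adic Brody hyperbolicity. The paper leaves the absence of rational curves unremarked as a standard fact, and your Albanese (equivalently, pullback of invariant $1$-forms) argument is the standard justification, so there is no substantive difference in approach.
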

Indeed, the arithmetic part follows from the Mordell--Weil theorem, while $p$-adic Brody hyperbolicity follows from Cherry's theorem \cite{Cherry}.

The main result of this section is the following:
\begin{theorem}\label{ThmNefTangent} Let $X$ be a smooth projective geometrically irreducible variety over $k$ whose tangent bundle $\Tcal_X$ is nef. Then the following are equivalent:
\begin{itemize}
\item[(i)] $X$ is xeric;
\item[(ii)] $X_{k^{\alg}}$ contains no rational curves;
\item[(iii)] for every prime $p$ and every embedding $\sigma:k\to \C_p$, the $p$-adic manifold $X_\sigma^{\an}$ is Brody hyperbolic.
\end{itemize}
\end{theorem}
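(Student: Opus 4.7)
The implications (i)$\Rightarrow$(ii) and (iii)$\Rightarrow$(ii) are immediate (as in the discussion right after Conjecture \ref{MainConj}) and use nothing about $\Tcal_X$. So it remains to prove (ii)$\Rightarrow$(i) and (ii)$\Rightarrow$(iii). My plan is to use the structure theory of varieties with nef tangent bundle to reduce, via a finite \'etale cover, to the case of an abelian variety, where both conclusions are already known: (i) follows from Mordell--Weil together with the quadratic growth of the N\'eron--Tate height (as in Lemma \ref{LemmaAbelian}), and (iii) follows from Cherry's theorem.

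The geometric input I would use is the Demailly--Peternell--Schneider structure theorem: a smooth projective variety with nef tangent bundle over an algebraically closed field of characteristic zero admits, after a finite \'etale cover, a smooth surjective Albanese morphism onto an abelian variety whose fibers are Fano varieties (themselves with nef tangent bundle). Applied to $X_{k^\alg}$, and combined with Lemma \ref{LemmaRatCurvesEtale} to see that hypothesis (ii) is preserved under \'etale covers, each Fano fiber must contain no rational curves. Since positive-dimensional Fano varieties are uniruled, the fibers are zero-dimensional, so the cover itself is an abelian variety $\tilde X$ over $k^\alg$. To descend this to $k$, I would first enlarge $k$ by a finite extension (permitted by Lemma \ref{LemmaBaseChange}) so that $X(k) \ne \emptyset$, and then apply Lemma \ref{LemmaEtalek} to obtain a finite \'etale Galois cover $\pi: Y \to X$ over $k$ whose geometric base change factors through $\tilde X$. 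Then $Y_{k^\alg}$ is a connected finite \'etale cover of the abelian variety $\tilde X$, hence itself an abelian variety; after one more finite extension of $k$, that structure descends so that $Y$ is an abelian variety over $k$.

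With $Y \to X$ a finite \'etale map over $k$ and $Y$ an abelian variety, both conclusions drop out: by Mordell--Weil, $Y$ is xeric, so $X$ is xeric by Lemma \ref{LemmaXericMorphisms}(ii); and for any prime $p$ and embedding $\sigma: k \to \C_p$, the base change $Y_\sigma$ is an abelian variety over $\C_p$ and hence Brody hyperbolic by Cherry's theorem, so any $p$-adic analytic map $f: \C_p \to X_\sigma^\an$ lifts along $\pi^\an$ to a $p$-adic analytic map $\widetilde f: \C_p \to Y_\sigma^\an$ (by the lifting lemma for $p$-adic maps along \'etale covers proved earlier in Section \ref{SecEtale}), which must be constant, forcing $f$ to be constant. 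The main difficulty is the invocation of the Demailly--Peternell--Schneider theorem, which does the essential geometric work of converting the numerical hypothesis on $\Tcal_X$ into a concrete fibration structure; the rest is a routine combination of that input with the functorial compatibilities of Section \ref{SecEtale} and the known case of abelian varieties.
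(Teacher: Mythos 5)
Your proposal is correct and takes essentially the same approach as the paper: both rest on the Demailly--Peternell--Schneider structure theorem reducing, after a finite \'etale cover, to an Albanese fibration with Fano fibers, combined with Mori's theorem (equivalently, uniruledness of positive-dimensional Fano varieties) to force the fibers to be zero-dimensional, and then conclude via the abelian variety case and the \'etale-cover compatibilities of Section \ref{SecEtale}. The only difference is organizational: the paper presents a dichotomy (fibers zero-dimensional, so all three conditions hold; fibers positive-dimensional, so all three fail), whereas you prove the implications (ii)$\Rightarrow$(i) and (ii)$\Rightarrow$(iii) directly — and you spend a bit more effort on the descent-to-$k$ bookkeeping (via Lemma \ref{LemmaEtalek}) than is strictly necessary, since the cover from DPS can simply be spread out to a finite extension of $k$.
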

We note that these three equivalent properties fail in (positive dimensional) projective spaces and hold in abelian varieties. Both examples have nef tangent bundle.

The case of nef tangent bundle is approachable due to the following important result by Demailly, Peternell, and Schneider \cite{DPS}:

\begin{theorem}\label{ThmDPS} Let $Y$ be a projective variety with nef tangent bundle, defined over an algebraically closed field $K$ of characteristic zero. Then there is a finite \'etale cover $\pi:Z\to Y$ defined over $K$ such that the albanese map $a:Z\to A$ of $Z$ is surjective, smooth, and the fibres are Fano varieties.
\end{theorem}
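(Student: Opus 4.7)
The plan is to prove the two nontrivial implications (ii) $\Rightarrow$ (i) and (ii) $\Rightarrow$ (iii); the converses, (i) $\Rightarrow$ (ii) and (iii) $\Rightarrow$ (ii), are recorded in the discussion following Conjecture \ref{MainConj}. The structural input is Theorem \ref{ThmDPS}: after a finite \'etale cover of $X_{k^\alg}$ we have an albanese map $a:Z\to A$ which is smooth and surjective with Fano fibres. The first step, common to both implications, is to eliminate these fibres. Assuming (ii) and applying Lemma \ref{LemmaRatCurvesEtale}(i), the cover $Z$ contains no rational curves; but a positive-dimensional smooth Fano variety over an algebraically closed field of characteristic zero is rationally connected, and in particular contains rational curves. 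Therefore every fibre of $a$ is zero-dimensional, so $a$ is finite \'etale, and since $Z$ is geometrically connected and $A$ is an abelian variety, $Z$ itself inherits the structure of an abelian variety after choosing any point in $a^{-1}(0)$ as origin.

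For (ii) $\Rightarrow$ (i), I use Lemma \ref{LemmaBaseChange} to replace $k$ by a finite extension $k'$ over which all of the above data (and in particular the abelian variety structure on $Z$ and the \'etale morphism $Z\to X_{k'}$) is defined, and such that $X(k')\neq\emptyset$. Lemma \ref{LemmaEtalek} then produces a finite \'etale Galois cover $Y\to X_{k'}$ defined over $k'$ whose geometric pullback factors through $Z\to X_{k^\alg}$; in particular $Y_{k^\alg}\to Z_{k^\alg}$ is finite \'etale, so $Y_{k^\alg}$ is itself an abelian variety and $Y$ is a torsor under some abelian variety $B/k'$. For every finite extension $L/k'$, either $Y(L)=\emptyset$ or $Y_L\simeq B_L$, so $Y(L)$ is either empty or in bijection with $B(L)$ with comparable heights; in either case $Y(L)$ is sparse, by Mordell--Weil together with the quadratic behavior of the N\'eron--Tate height (cf.\ Lemma \ref{LemmaAbelian}). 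Thus $Y$ is xeric over $k'$, Lemma \ref{LemmaXericMorphisms}(ii) yields that $X_{k'}$ is xeric, and a final application of Lemma \ref{LemmaBaseChange} gives the xericity of $X$ over $k$.

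For (ii) $\Rightarrow$ (iii), I fix a prime $p$ and an embedding $\sigma:k\to \C_p$. In order to apply Theorem \ref{ThmDPS} over $\C_p$, I first need to know that $X_\sigma$ contains no rational curves; this follows from (ii) by a spreading-out argument, since $\Mor(\Pro^1,X)$ is a $k$-scheme of finite type and therefore has a $\C_p$-point only if it has a $k^\alg$-point. Running the reduction above over $\C_p$ produces a finite \'etale cover $Z\to X_\sigma$ with $Z$ an abelian variety over $\C_p$. Given any $p$-adic analytic map $f:\C_p\to X_\sigma^\an$, the lemma on $p$-adic maps and \'etale covers lifts it to $\widetilde{f}:\C_p\to Z^\an$, and Cherry's theorem \cite{Cherry} forces $\widetilde{f}$ to be constant, hence so is $f$.

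The main obstacle I anticipate is the arithmetic bookkeeping in (ii) $\Rightarrow$ (i): arranging fields of definition so that the a priori $k^\alg$-cover produced by Theorem \ref{ThmDPS} can be combined with Lemma \ref{LemmaEtalek} to yield a cover defined over a finite extension of $k$, and verifying that xericity propagates correctly through the torsor $Y\to X_{k'}$ and the base change back to $k$. By contrast, the $p$-adic implication is comparatively cleaner because Cherry's theorem operates directly over $\C_p$, bypassing any descent issue.
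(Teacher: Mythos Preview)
Your proposal is not a proof of the stated Theorem~\ref{ThmDPS} at all. Theorem~\ref{ThmDPS} is the Demailly--Peternell--Schneider structure theorem, which the paper does not prove but simply quotes from \cite{DPS} (with a one-line remark on descent from $\C$ to $K$). What you have written is instead an argument for Theorem~\ref{ThmNefTangent}, \emph{using} Theorem~\ref{ThmDPS} as a black box. So as a submission for the statement named, the proposal is vacuous.

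Read as a proof of Theorem~\ref{ThmNefTangent}, your argument is correct and follows the same overall strategy as the paper (reduce via Theorem~\ref{ThmDPS} to an abelian \'etale cover, then invoke Lemma~\ref{LemmaAbelian} and the \'etale-transfer lemmas of Section~\ref{SecEtale}), but it is organized differently and adds unnecessary work in two places. First, for (ii)~$\Rightarrow$~(i) you go through Lemma~\ref{LemmaEtalek} and a torsor argument; the paper simply notes that after passing to a finite extension of $k$ the DPS cover $\pi:Z\to X$ is already defined there, so that Lemma~\ref{LemmaXericMorphisms}(ii) applies directly to $\pi$ with $Z$ an abelian variety. Second, for (ii)~$\Rightarrow$~(iii) you re-run Theorem~\ref{ThmDPS} over $\C_p$ and justify the absence of rational curves there by a spreading-out argument; this detour is avoidable, since the \'etale cover $Z\to X_{k^{\alg}}$ already constructed base-changes along $\sigma$ to a finite \'etale cover of $X_\sigma$ by an abelian variety, and then the $p$-adic lifting lemma plus Cherry's theorem apply immediately. (Incidentally, your claim that $\Mor(\Pro^1,X)$ is of finite type is not literally true---it is only locally of finite type---though the conclusion you draw from it is still correct componentwise.) The paper's organization by cases (fibres zero-dimensional versus positive-dimensional) also has the minor advantage of showing directly that all three properties fail simultaneously in the Fano-fibre case, rather than relying on the previously recorded easy implications.
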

The result in \cite{DPS} is stated for compact K\"ahler manifolds, but if one specializes to projective varieties defined over an algebraically closed field $K$ of characteristic zero, then $\pi:Z\to Y$ can also be defined over $K$ because it is \'etale.

The Fano varieties mentioned in the theorem can be $0$-dimensional, which is the case when $Z$ is an abelian variety. When they are positive dimensional, the following celebrated result of Mori \cite{Mori} will be crucial for us:

\begin{theorem}\label{ThmMori} Let $Y$ be a smooth projective positive dimensional variety over an algebraically closed field $K$ of characteristic $0$. If $Y$ is Fano, then $Y$ contains rational curves defined over $K$.
\end{theorem}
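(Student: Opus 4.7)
This is Mori's celebrated theorem on rational curves on Fano varieties, so the plan is to reproduce Mori's original strategy via reduction to positive characteristic and the bend-and-break technique. Since we work over a characteristic-zero field $K$, there is no direct way to produce rational curves, so I would first spread $Y$ out as a smooth projective family $\Ycal\to\Spec R$ over a finitely generated $\Z$-subalgebra $R\subseteq K$ with generic fiber $Y$, keeping the Fano condition on a dense open of $\Spec R$. Closed points $s\in\Spec R$ with finite residue field then give Fano specializations $Y_s$ over some $\F_q$, where Frobenius becomes available as a tool.

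Over such a specialization $Y_s$, pick any non-constant morphism $g:C\to Y_s$ from a smooth projective curve (which exists since $\dim Y_s>0$). Pre-composing $g$ with the $n$-th power of the absolute Frobenius of $C$ produces morphisms $f_n:C\to Y_s$ whose cycle class $f_{n,*}[C]$ satisfies $-K_{Y_s}\cdot f_{n,*}[C]=p^n(-K_{Y_s}\cdot g_*[C])$, while the genus of the source remains $g(C)$. The standard deformation-theoretic estimate via Riemann--Roch bounds the dimension of $\Mor(C,Y_s;\,P\mapsto Q)$ of morphisms sending a fixed $P\in C$ to a fixed $Q\in Y_s$ from below by
\[
-K_{Y_s}\cdot f_{n,*}[C] \;-\; g(C)\,\dim Y_s.
\]
Since $-K_{Y_s}$ is ample, this grows like $p^n$ and becomes strictly positive for $n\gg 0$, producing a non-trivial one-parameter family $\{f_{n,t}\}$ of morphisms $C\to Y_s$ all satisfying $f_{n,t}(P)=Q$.

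Next, I would invoke the bend-and-break lemma: such a family cannot extend to a proper family of morphisms from $C$ while keeping a base point on the source fixed, so compactifying the parameter space forces the limiting stable map to degenerate into a reducible configuration containing a rational curve through $Q$. Iterating this argument yields a rational curve in $Y_s$ of anticanonical degree at most $\dim Y_s+1$, a bound uniform in $s$. Finally, I would lift back to characteristic $0$ using Hilbert-scheme methods: the relative morphism scheme $\Mor_{\le d}(\Pro^1,\Ycal/R)$ parametrizing $\Pro^1\to Y_s$ of bounded anticanonical degree $d=\dim Y+1$ is of finite type over $R$ and, by the preceding, non-empty over a Zariski-dense set of closed fibers of $\Spec R$. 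Constructibility of its image then forces dominance onto $\Spec R$, so its generic fiber is non-empty; base-changing to $K$ produces a non-constant morphism $\Pro^1_K\to Y$. The hard part will be the bend-and-break step itself, which requires careful projective deformation theory together with an analysis of proper limits of one-parameter families of maps under a base-point constraint.
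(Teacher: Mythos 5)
The paper does not prove this statement; it invokes it directly as Mori's theorem, citing \cite{Mori}. Your sketch correctly reproduces Mori's original bend-and-break argument (spread out over a finitely generated $\Z$-algebra, reduce to closed fibers of positive characteristic, amplify the anticanonical degree by Frobenius twists so that the pointed deformation bound $-K_{Y_s}\cdot f_{*}[C]-g(C)\dim Y_s$ becomes positive, apply bend-and-break to produce a rational curve through the fixed point of $-K$-degree at most $\dim Y+1$, and lift back to characteristic zero via constructibility of the image of the finite-type morphism scheme), which is exactly the standard route for this theorem.
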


With these results at hand, we are ready for the main point of this section.
\begin{proof}[Proof of Theorem \ref{ThmNefTangent}] The result is trivial if $X$ is a point, so we can assume that $\dim(X)>0$. Items (i), (ii), and (iii) can be checked after base change to a finite extension of $k$, so, thanks to Theorem \ref{ThmDPS} we may assume that we have a finite \'etale cover $\pi:Z\to Y$ defined over $k$ such that the albanese map $a:Z\to A$ of $Z$ (also defined over $k$) is surjective, smooth, and the fibres are Fano varieties.

First, assume that the fibres of $a$ are $0$-dimensional. Then $Z=A$ is an abelian variety and we apply Lemma \ref{LemmaAbelian} to it. Thanks to to the results in Section \ref{SecEtale}, the same holds for $X$.

Finally, assume that the fibres of $a$ are positive dimensional. Then by Mori's Theorem \ref{ThmMori} there is a non-constant morphism $f:\Pro^1_L\to X_L$ defined over a finite extension $L/k$. It follows that (ii) and (iii) fail for $X$. Furthermore, by Lemma \ref{LemmaXericMorphisms}, item (i) also fails for $X$ because it fails for $\Pro^1_L$.
\end{proof}

\section{Positivity of the cotangent bundle} \label{SecCot}

In the previous section we studied Conjecture \ref{MainConj} for varieties whose tangent bundle is nef. Now we consider the dual situation: when the \emph{cotangent} bundle satisfies some positivity assumption.

Compact complex manifolds with ample cotangent bundle are hyperbolic \cite{Kobayashi}. Thus, if $X$ is a smooth projective variety defined over $k$ with ample cotangent bundle $\Omega^1_X$, Lang's conjecture predicts that it is Mordellic. Unconditionally, Moriwaki \cite{Moriwaki} used results of Faltings \cite{Faltings2,Faltings3} to prove Mordellicity under the assumption that $\Omega^1_X$ be ample and \emph{globally generated}.

Regarding our setting, one expects xericity and $p$-adic Brody hyperbolicity under much weaker positivity conditions for $\Omega^1_X$. First, one has the following well-known observation (see \cite{Litt} for a characteristic-free proof):

\begin{lemma} Let $Y$ be a smooth projective variety over an algebraically closed field $K$, having nef cotangent bundle. Then $Y$ contains no rational curve.
\end{lemma}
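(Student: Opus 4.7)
The plan is to derive a contradiction by pulling back $\Omega^1_Y$ along a non-constant map from $\Pro^1$ and comparing degrees on $\Pro^1$.

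First I would reduce to the existence of a non-constant morphism $f:\Pro^1\to Y$ whose derivative does not vanish identically. Indeed, a rational curve in $Y$ gives, by normalization, a non-constant $f:\Pro^1\to Y$; in characteristic $0$ the differential $df:\Tcal_{\Pro^1}\to f^*\Tcal_Y$ is automatically non-zero. In positive characteristic one factors $f$ through a suitable power of the absolute Frobenius $F^n:\Pro^1\to\Pro^1$ to obtain a morphism $f_0:\Pro^1\to Y$ with $f=f_0\circ F^n$ and $df_0\ne 0$; this is the step that requires some care and is the main technical point (handled in the characteristic-free treatment alluded to by the reference to \cite{Litt}). From now on I assume $df\ne 0$.

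Dualizing $df$ I would obtain a morphism of sheaves on $\Pro^1$
\[
\varphi \colon f^*\Omega^1_Y \longrightarrow \Omega^1_{\Pro^1} \cong \Ocal_{\Pro^1}(-2),
\]
whose image $\Gcal:=\mathrm{image}(\varphi)$ is a non-zero coherent subsheaf of $\Ocal_{\Pro^1}(-2)$. Since $\Ocal_{\Pro^1}(-2)$ is an invertible sheaf on the smooth curve $\Pro^1$, the subsheaf $\Gcal$ is torsion-free of rank $1$, so $\Gcal \cong \Ocal_{\Pro^1}(m)$ for some integer $m\le -2$. In particular $\deg\Gcal<0$.

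On the other hand, by hypothesis $\Omega^1_Y$ is nef, and nefness is preserved by pullback, so $f^*\Omega^1_Y$ is a nef vector bundle on $\Pro^1$. By construction $\Gcal$ is a line-bundle quotient of $f^*\Omega^1_Y$, and any quotient of a nef bundle is nef; in the case of a line bundle on a curve this means $\deg\Gcal\ge 0$. This contradicts $\deg\Gcal\le -2$, proving that no non-constant morphism $\Pro^1\to Y$ can exist, hence $Y$ contains no rational curve.
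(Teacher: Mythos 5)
Your proof is correct, and it is the standard argument; the paper itself does not include a proof of this lemma but simply cites Litt's ``Non-Abelian Lefschetz hyperplane theorems'' for a characteristic-free treatment, and the argument there is in the same spirit as yours (dualize the differential of a map from $\Pro^1$, obtain a negative-degree quotient of the pulled-back nef bundle, contradiction). Every step you take is sound: the image of the nonzero map $\varphi\colon f^*\Omega^1_Y\to\Ocal_{\Pro^1}(-2)$ is torsion-free of rank one on a smooth curve, hence a line bundle of degree at most $-2$; pullback of a nef vector bundle is nef; and a quotient line bundle of a nef bundle on a curve has nonnegative degree.

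One small remark on efficiency: the Frobenius-factoring step you flag as ``the main technical point'' is actually unnecessary once you have taken the normalization. If $C\subseteq Y$ is a rational curve, the normalization $\nu\colon\Pro^1\to C$ is birational, hence an isomorphism over the smooth locus of $C$; composing with the inclusion $C\hookrightarrow Y$ therefore produces $f\colon\Pro^1\to Y$ whose differential is injective at a generic point in \emph{every} characteristic, with no appeal to Frobenius. The Frobenius factorization is what one would need if one started from an arbitrary non-constant morphism $\Pro^1\to Y$ (which may be purely inseparable onto its image), but passing to the normalization of the image sidesteps that issue entirely. So your proof is correct but can be streamlined, and in its streamlined form it is already characteristic-free.
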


Therefore, Conjecture \ref{MainConj} implies the following:

\begin{conjecture}[The case of nef cotangent bundle]\label{ConjNefCot} Let $X$ be a smooth projective variety over $k$ having nef cotangent bundle $\Omega^1_X$. Then $X$ is xeric, and for each prime $p$ and each embedding $\sigma:k\to \C_p$ we have that $X_\sigma^{\an}$ is Brody hyperbolic.
\end{conjecture}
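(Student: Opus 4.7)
The proof plan splits the conjecture into a structural reduction followed by analytic and arithmetic inputs on the resulting building blocks. The overall strategy mirrors Section \ref{SecNefT}: produce a finite \'etale cover of $X$ that fibres over an abelian variety with well-controlled fibres, then combine Lemma \ref{LemmaAbelian} with a separate treatment of the fibres through the tools of Section \ref{SecEtale}.

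First I would study the albanese map $a:X\to A$. The nefness of $\Omega^1_X$ makes the trivial subbundle $a^*\Omega^1_A\hookrightarrow \Omega^1_X$ compatible with the nefness in a strong sense, and the aim is to prove a dual analogue of the Demailly--Peternell--Schneider theorem used in the nef tangent case: after a finite \'etale cover $Y\to X$, the albanese map $Y\to B$ should be surjective, smooth, and the general fibre $F$ should have canonical bundle that is ample (or at least big), while inheriting nef cotangent bundle via restriction. Extracting such positivity of $\Omega^1_F$ from the relative cotangent sequence and the nefness of $\Omega^1_Y$, then upgrading from generic to uniform behaviour across all fibres, is the main obstacle of the plan and has no ready-made reference in the literature comparable to DPS.

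Granted such a structure theorem, the xericity and $p$-adic Brody hyperbolicity of $X$ reduce, via Lemma \ref{LemmaXericMorphisms}, Lemma \ref{LemmaBaseChange}, and a standard fibration argument, to the same statements for the abelian base (handled by Lemma \ref{LemmaAbelian}) and for the fibres $F$. When $\dim F=0$ the cover $Y$ is itself an abelian variety and one is done. For positive-dimensional fibres $F$ with ample cotangent bundle, xericity of $F$ should follow by combining Moriwaki's theorem with the Brunebarbe--Maculan criterion via the large fundamental group produced by the ampleness hypothesis, while the $p$-adic Brody hyperbolicity of $F$ would require a $p$-adic analogue of the classical argument of Demailly: a non-constant $p$-adic analytic map $f:\C_p\to F^{\an}$ lifts via its derivative to $\tilde f:\C_p\to P(\Omega^1_F)^{\an}$, and the ampleness of the tautological sheaf should give a contradiction through a $p$-adic Nevanlinna-style second main theorem.

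The principal obstacle is the conjectured structure theorem in the first step: no DPS-type dichotomy is known for nef cotangent bundle in the generality required, and even the expected behaviour of the albanese morphism is not established. A secondary but still serious obstacle is the $p$-adic Nevanlinna machinery needed for the ample cotangent fibres, since the available positive results (Berkovich for curves of genus $\geq 1$ and Cherry for abelian varieties) do not directly cover this case and the analogues of the Ahlfors--Schwarz lemma in the non-archimedean world are considerably more delicate. Any partial progress on the conjecture is therefore likely to proceed by restricting to classes where such a structure theorem can be proved by hand, for instance surfaces (classification of minimal models with $\Omega^1_X$ nef) or cases with additional semi-ampleness assumptions on $\Omega^1_X$.
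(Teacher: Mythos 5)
The statement you are trying to prove is labeled a \emph{conjecture} in the paper, and the paper does not give a proof of it: it proves only partial cases. Your proposal, which explicitly identifies the absence of a Demailly--Peternell--Schneider-type structure theorem for nef cotangent bundle as the central obstacle, is therefore honest about its own incompleteness, which is appropriate. That said, two substantive comments on your plan.

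First, your proposed structure theorem appears to have the orientation of base and fibre reversed relative to what is actually known. You ask that a finite \'etale cover $Y$ admit an albanese map $Y\to B$ onto an abelian variety with general fibre $F$ of big or ample canonical bundle, and you plan to extract positivity of $\Omega^1_F$ from the relative cotangent sequence. But the partial result the paper proves uses H\"oring's structure theorem, which in the case $\kappa(X)=1$ produces (after an \'etale cover) a fibration $Y\to C$ onto a \emph{curve of genus $\ge 2$} with \emph{abelian} fibres --- the base is of general type and the fibres are abelian, the exact opposite of your picture. In that known case the albanese map of $Y$ is generically injective (up to isogeny, $\mathrm{Alb}(Y)$ is an extension involving the Jacobian of $C$ and the abelian fibre), so it is not a fibration with positive-dimensional fibres of general type, and the mechanism you describe (uniform ampleness of $\Kcal_F$ across fibres) would have nothing to bite on. Any corrected structure conjecture should expect the Iitaka-type fibration to have general type base and abelian (or at least $\kappa=0$) fibres.

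Second, on the $p$-adic side you propose building a $p$-adic second main theorem to handle positive-dimensional fibres with ample cotangent bundle, but the paper's actual argument is considerably more elementary and applies under the weaker hypothesis of \emph{semiample} cotangent bundle (Lemma~\ref{LemmaPAdicSemiample}): a non-constant analytic map $f:\C_p\to Y^{\an}$ with $\Omega^1_Y$ semiample would produce a symmetric differential $\omega$ with $f^*\omega\ne 0$, contradicting the $p$-adic geometric logarithmic derivative lemma of Cherry--Ru. No Ahlfors--Schwarz or second main theorem is needed; one line of the existing $p$-adic Nevanlinna machinery suffices. Note also that the claim that ample $\Omega^1_X$ forces large algebraic fundamental group (which you use to invoke Brunebarbe--Maculan on the fibres) is itself open in general and related to Shafarevich-type conjectures, so that branch of your plan rests on a second unproven assertion. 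The paper's partial results on the arithmetic side are instead: the case $\Omega^1_X$ globally generated, where the albanese map is finite and one reduces to abelian varieties; and the case $\Kcal_X$ semiample with $\kappa(X)\in\{0,1\}$, via H\"oring plus Faltings for the genus $\ge 2$ base and Mordell--Weil on the abelian fibres.
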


Here we would like to give some unconditional support for this conjecture, to the effect that some weak positivity condition on the cotangent bundle should be enough to imply xericity and $p$-adic Brody byperbolicity. 

In the remainder of this section, $X$ is a smooth projective variety over $k$. Let us denote by $\kappa(X)$ the Kodaira dimension of $X$. We have:

\begin{theorem} If $\Omega^1_X$ is nef, the canonical sheaf $\Kcal_X$ is semiample, and $\kappa(X)$ is either $0$ or $1$, then Conjecture \ref{ConjNefCot} holds for $X$.
\end{theorem}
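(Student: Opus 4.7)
The strategy is to split on $\kappa(X)\in\{0,1\}$ and use structure theorems for smooth projective varieties with nef cotangent bundle to reduce, in each case and up to finite \'etale cover, to model cases for which both xericity and $p$-adic Brody hyperbolicity are already established. The \'etale-cover compatibility of both properties proved in Section \ref{SecEtale}, together with Lemma \ref{LemmaAbelian}, then closes the argument.

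In the case $\kappa(X)=0$, semiampleness of $\Kcal_X$ combined with $\kappa(X)=0$ forces $m\Kcal_X\simeq\Ocal_X$ for some positive integer $m$, so $\Kcal_X$ is torsion and becomes trivial on a finite \'etale cover of $X$. On that cover $\Omega^1_X$ is nef with $c_1(\Omega^1_X)=0$, hence numerically flat by Demailly--Peternell--Schneider \cite{DPS}; a further finite \'etale cover then identifies $X$ with an abelian variety. Lemma \ref{LemmaAbelian} together with the results of Section \ref{SecEtale} conclude this case.

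In the case $\kappa(X)=1$, the Iitaka fibration $\phi:X\to C$ maps onto a smooth projective curve $C$ with $m\Kcal_X\simeq \phi^*H$ for some ample line bundle $H$ on $C$. A general fiber $F$ satisfies $\Kcal_F\simeq \Kcal_X|_F$ torsion (by adjunction, since $mK_X|_F=0$ and $N_{F/X}\simeq \Ocal_F$) and $\Omega^1_F$ nef (as a quotient of the nef bundle $\Omega^1_X|_F$). Hence by the previous case each fiber $F$ is \'etale-covered by an abelian variety and is in particular xeric and $p$-adic Brody hyperbolic. Using nefness of $\Omega^1_X$ one further establishes $g(C)\geq 2$: if $g(C)=0$ or $g(C)=1$, rigidity of variations of Hodge structure would force a finite \'etale cover of $X$ to split as a product of an abelian fiber with the base, whose cotangent bundle is either not nef (when $C=\Pro^1$, via the non-nef summand $\mathrm{pr}_2^*\Ocal(-2)$) or gives $\kappa(X)=0$ (when $g(C)=1$, contradicting $\kappa(X)=1$). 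With $g(C)\geq 2$, Faltings' theorem gives that $C(L)$ is finite for every finite extension $L/k$, so $X(L)=\bigsqcup_{c\in C(L)}F_c(L)$ is a finite disjoint union of sparse subsets and $X$ is xeric. For $p$-adic Brody hyperbolicity, any analytic $f:\C_p\to X^{\an}$ composes with $\phi^{\an}$ to a constant map by Berkovich's theorem \cite{Berkovich}, hence $f$ lands in a single fiber and is constant.

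The main obstacle is establishing $g(C)\geq 2$ rigorously. The required isotriviality/rigidity statement for the Iitaka fibration should follow from an analysis of the Hodge bundle $\phi_*\Omega^1_{X/C}$ and its numerical flatness inherited from nefness of $\Omega^1_X$, combined with an Arakelov--Deligne--type positivity argument. Once this is in place, the remainder reduces to standard height counting on fibrations over curves of genus $\geq 2$ and to the \'etale descent developed in Section \ref{SecEtale}.
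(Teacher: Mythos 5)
Your overall strategy matches the paper's: reduce, up to finite \'etale covers, to abelian varieties ($\kappa=0$) and to abelian fibrations over curves of genus at least $2$ ($\kappa=1$), then conclude with Lemma~\ref{LemmaAbelian} on the fibres together with Faltings' theorem (xericity) and Berkovich's theorem ($p$-adic hyperbolicity) on the base, and the \'etale-cover compatibility of Section~\ref{SecEtale}. The paper obtains this reduction in a single stroke by invoking H\"oring's structure theorem for projective manifolds with nef cotangent bundle \cite{Horing}, which supplies a finite \'etale cover $Y$ of $X_{k^\alg}$ that is either an abelian variety or an abelian fibration over a genus-$\ge 2$ curve, according as $\kappa(X)$ is $0$ or $1$. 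You instead try to re-derive that structure. Your $\kappa=0$ argument (torsion $\Kcal_X$ trivializes on a cover, then $\Omega^1_X$ is numerically flat by \cite{DPS}, then Theorem~\ref{ThmDPS} applied to the dual with $\kappa=0$ forces the Fano fibres to be points, so a further cover is abelian) is essentially a re-proof of H\"oring's theorem in that case and can be made to work.

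The $\kappa=1$ case, however, has a genuine gap that you flag yourself: you need $g(C)\ge 2$ for the base of the Iitaka fibration, and the argument you offer is only a sketch invoking ``rigidity of variations of Hodge structure'' and an unspecified ``Arakelov--Deligne-type positivity argument.'' That genus bound is precisely the nontrivial content of H\"oring's theorem in this case, and the informal reasoning does not discharge it: observing a non-nef summand $\mathrm{pr}_2^*\Ocal(-2)$ only rules out a literal product $F\times\Pro^1$, not a general abelian fibration over $\Pro^1$ (nor even an isotrivial one that splits only after a non-\'etale base change), and the $g(C)=1$ contradiction is reached only \emph{after} one already has the very isotriviality one is trying to establish. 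To make the proof self-contained you would need to actually prove the genus bound, or simply cite \cite{Horing} as the paper does.
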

\begin{proof} By H\"oring's structure theorem \cite{Horing} we have that $X_{k^\alg}$ admits an \'etale cover $Y$ satisfying:
\begin{itemize}
\item $Y$ is an abelian variety if $\kappa(X)=0$, or

\item $Y$ is an abelian fibration over a smooth projective curve $C$ of genus $g\ge 2$ if $\kappa(X)=1$.

\end{itemize}

In the first case we conclude by Lemma \ref{LemmaAbelian}. In the second case, we conclude applying Faltings's theorem \cite{Faltings1} (for xericity) or Berkovich's theorem \cite{Berkovich} (for $p$-adic Brody hyperbolicity) on $C$, and then Lemma \ref{LemmaAbelian} on the fibres.
\end{proof}

\begin{remark}
One can hope that Horing's structure theorem (used in the previous proof) might give a way to attack Conjecture \ref{ConjNefCot} in more generality without the assumption $\kappa(X)=0,1$.
\end{remark}

Concerning xericity, without assumptions on $\kappa(X)$ we have the simple (although general) observation:

\begin{lemma} If $\Omega^1_X$ is globally generated, then $X$ is xeric.
\end{lemma}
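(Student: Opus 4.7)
The plan is to reduce to the abelian-variety case by factoring $X$ through its Albanese variety and invoking Lemmas~\ref{LemmaAbelian} and~\ref{LemmaXericMorphisms}. The claim is trivial if $X$ is zero-dimensional, so assume $\dim X\ge 1$. Since xericity is insensitive to finite base change (Lemma~\ref{LemmaBaseChange}), I would first enlarge $k$ to guarantee $X(k)\ne\emptyset$, and then fix a base point to define the Albanese map $a\colon X\to A:=\mathrm{Alb}(X)$ over $k$.

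The key step is to show that $a$ is a finite morphism. For this I would invoke the classical isomorphism $a^{*}\colon H^{0}(A,\Omega^{1}_{A})\xrightarrow{\sim} H^{0}(X,\Omega^{1}_{X})$, which identifies every global $1$-form on $X$ with the pullback under $a$ of a translation-invariant $1$-form on $A$. Under this identification, the hypothesis that $\Omega^{1}_{X}$ is globally generated says that at every geometric point $x\in X$ the fibre $\Omega^{1}_{X,x}$ is spanned by pullbacks of invariant $1$-forms on $A$. Dualizing, this is exactly the assertion that the tangent map $da_{x}\colon T_{x}X\to T_{a(x)}A$ is injective for every $x$; that is, $a$ is unramified. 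Since $X$ is projective, $a$ is automatically proper, and a proper unramified morphism of Noetherian schemes is quasi-finite and proper, hence finite.

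Once $a\colon X\to A$ is known to be finite, Lemma~\ref{LemmaAbelian} gives that $A$ is xeric, and Lemma~\ref{LemmaXericMorphisms}(i), applied with $U=A$ and $V=X=a^{-1}(A)$, concludes that $X$ is xeric. The only non-formal ingredient is the translation between global generation of $\Omega^{1}_{X}$ and the injectivity of $da$ via the Albanese identification on global $1$-forms; this is a standard differential-geometric computation, and I would expect it to be the sole point requiring explicit verification. Everything else is a formal consequence of the lemmas already in hand.
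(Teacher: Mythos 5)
Your proposal is correct and follows the same route as the paper: global generation of $\Omega^1_X$, combined with the Albanese identification $a^*\colon H^0(A,\Omega^1_A)\xrightarrow{\sim}H^0(X,\Omega^1_X)$, forces the Albanese map to be unramified, and properness then makes it finite, so xericity descends from $A$ via Lemmas~\ref{LemmaAbelian} and~\ref{LemmaXericMorphisms}. The paper states the finiteness of $a$ without further elaboration; your write-up simply fills in the details of that step.
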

\begin{proof} The condition implies that the albanese map $a:X\to A$ is a finite (and unramified) morphism. We conclude by Lemmas \ref{LemmaAbelian} and \ref{LemmaXericMorphisms}.
\end{proof}

Finally, regarding $p$-adic Brody hyperbolicity we have:

\begin{lemma}\label{LemmaPAdicSemiample} Let $Y$ be a $p$-adic projective manifold defined over $\C_p$ satisfying that $\Omega^1_Y$ is semiample; i.e. $\Ocal_P(n)$ is globally generated on $P=\Proj_Y(\Omega^1_Y)$ for some $n\ge 1$. Then $Y$ is Brody hyperbolic.
\end{lemma}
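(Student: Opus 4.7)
Plan. I would argue by contrapositive: given a $p$-adic analytic map $f\colon\C_p \to Y^{\an}$, I aim to show that $f$ must be constant. Suppose for contradiction that $f$ is non-constant. The strategy is to lift $f$ through the projective bundle $\pi\colon P := \Proj_Y(\Omega^1_Y) \to Y$, exploit the globally generating sections furnished by semi-ampleness of $\Ocal_P(n)$, and reduce by Noetherian induction on $\dim Y$ to the case of curves, handled by Berkovich's theorem.

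Since $f$ is non-constant, the derivative $f^*\Omega^1_Y \to \Omega^1_{\C_p^{\an}} = \Ocal_{\C_p^{\an}}\cdot dz$ is a non-zero map of coherent sheaves; saturating its image inside the rank-one sheaf $\Omega^1_{\C_p^{\an}}$ produces a surjection $f^*\Omega^1_Y \twoheadrightarrow \Ocal_{\C_p^{\an}}$. By the universal property of $P$ this surjection corresponds to a $p$-adic analytic lift $\tilde{f}\colon\C_p \to P^{\an}$ with $\pi^{\an}\circ\tilde{f} = f$, and $\tilde{f}^*\Ocal_P(1) \cong \Ocal_{\C_p^{\an}}$ (using that analytic line bundles on the $p$-adic affine line are trivial, as already invoked in Section \ref{SecEtale}). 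Choose sections $s_0,\dots,s_N \in H^0(P,\Ocal_P(n))$ that globally generate $\Ocal_P(n)$; they define a morphism $\phi\colon P \to \Pro^N$ with image a projective variety $Z := \phi(P)$ on which $\Ocal_Z(1) := \Ocal_{\Pro^N}(1)|_Z$ is ample. Under a trivialisation of $\tilde{f}^*\Ocal_P(n)$, the $s_i$'s pull back to entire functions $g_0,\dots,g_N$ on $\C_p$ with no common zero, assembling into a $p$-adic analytic map $\mu := \phi^{\an}\circ\tilde{f}\colon\C_p\to Z^{\an}$. Because $\Ocal_P(n)$ restricts to the very ample line bundle $\Ocal_{\Pro^{d-1}}(n)$ on each $\pi$-fibre $P_y \cong \Pro^{d-1}$ (with $d = \dim Y$), $\phi$ embeds every $\pi$-fibre into $Z$, and therefore $\pi \times \phi\colon P \hookrightarrow Y \times Z$ is a closed embedding.

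The core of the proof is to show that $f$ factors through a proper closed subvariety $V \subsetneq Y$; Noetherian induction on $\dim Y$ then concludes, since $\Omega^1_{V'}$ (for $V' \to V$ a resolution) inherits semi-ampleness via the quotient $\Omega^1_Y|_{V^{\mathrm{reg}}} \twoheadrightarrow \Omega^1_{V^{\mathrm{reg}}}$ --- quotients of semi-ample vector bundles are semi-ample, via the closed embedding of the associated projective bundles and restriction of basepoint-free line bundles --- while the base case $\dim Y = 1$ is Berkovich's theorem applied to the curve $Y$ of genus $\geq 1$ (on a curve, semi-ampleness of $\Omega^1_Y$ forces $g \geq 1$). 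To exhibit such a $V$, I would split on whether $\mu$ is constant: when $\mu \equiv z_0$, the lift $\tilde{f}$ lies in the fibre $\phi^{-1}(z_0)\subseteq P$, which (since $\phi$ is fibre-wise an embedding) projects injectively via $\pi$ to a proper closed subvariety of $Y$ through which $f$ factors; when $\mu$ is non-constant, I would compose with the Albanese morphism $Z \to \mathrm{Alb}(Z)$ and invoke Cherry's theorem, forcing this composition to be constant, so that $\mu$ factors through a fibre of $Z \to \mathrm{Alb}(Z)$, and the embedding $\pi \times \phi$ then transports this constraint to a proper closed subvariety of $Y$ containing $f(\C_p)$.

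The main obstacle is the non-constant $\mu$ case: while Cherry's theorem on the Albanese of $Z$ provides partial rigidity, translating this into a proper closed subvariety of $Y$ through $\pi \times \phi$ is delicate, since an Albanese-fibre in $Z$ may in principle project surjectively onto $Y$ under $\pi$. Overcoming this likely requires either iterating the entire argument on the Albanese-fibre in place of $Z$ (noting that the polarisation and the ruling by the $\phi(P_y)$'s persist after restriction), or providing an independent $p$-adic Liouville/Bloch-type bound on the entire functions $g_i$ that uses the algebraic relations cutting out $Z$ in $\Pro^N$.
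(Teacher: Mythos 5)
Your proposal takes a genuinely different and much longer route than the paper, and it contains a gap that you yourself acknowledge and do not close. The paper's proof is a short direct argument: since $\Omega^1_Y$ is semiample, the sheaf of degree-$n$ symmetric differentials $S^n\Omega^1_Y = \pi_*\Ocal_P(n)$ is globally generated for some $n \ge 1$. If $f:\C_p\to Y^{\an}$ were non-constant, pick $z_0$ with $df(z_0)\neq 0$; by global generation there is a symmetric differential $\omega\in H^0(Y, S^n\Omega^1_Y)$ not vanishing on the image of $(df)_{z_0}^{\otimes n}$, so that $f^*\omega$ is a non-zero analytic symmetric differential on $(\A^1)^{\an}$. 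This directly contradicts the geometric logarithmic derivative lemma of non-archimedean Nevanlinna theory (Lemma 4.3 of Cherry--Ru \cite{CherryRu}), which states that every global symmetric differential on a projective $p$-adic manifold pulls back to zero under any $p$-adic analytic map from $\C_p$. No lift to $P$, no Albanese, no Noetherian induction.

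As for your argument as it stands: the unresolved step is exactly the one you flag. In the case where $\mu=\phi^{\an}\circ\tilde f$ is non-constant, Cherry's theorem confines $\mu(\C_p)$ to a fiber of the Albanese of $Z$, but you supply no reason why the preimage of that fiber under $\phi$, projected by $\pi$, is a proper closed subset of $Y$; such a fiber can indeed dominate $Y$. There is also a second problem you did not raise, in the inductive step itself: to apply the induction hypothesis to a resolution $V'\to V$ of a proper closed $V\subsetneq Y$, you need $\Omega^1_{V'}$ semiample. The quotient map $\Omega^1_Y|_{V^{\mathrm{reg}}}\twoheadrightarrow\Omega^1_{V^{\mathrm{reg}}}$ does not extend to a surjection from the pullback of $\Omega^1_Y$ onto $\Omega^1_{V'}$ over the exceptional locus (the cokernel is supported there), so semiampleness is not inherited by the resolution. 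The missing ingredient, which removes the need for the entire induction, is the Cherry--Ru lemma above --- a genuine $p$-adic Nevanlinna-theoretic input strictly stronger than the special cases (Berkovich for curves, Cherry for abelian varieties) you were permitting yourself.
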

\begin{proof} Suppose that we have a non-constant analytic map $f:\C_p\to Y$ and let $z_0\in \C_p$ be such that $f$ has non-zero differential at $z_0$. By our assumption, there is some symmetric differential $0\ne \omega \in H^0(Y,S^n\Omega^1_Y)$ such that $f^*\omega$ is not the zero (analytic) differential on $(\A^1)^\an$. This contradicts the geometric logarithmic derivative lemma of $p$-adic Nevanlinna theory as proved in Lemma 4.3 of \cite{CherryRu}.
\end{proof}

The relation of these two last lemmas with Conjecture \ref{ConjNefCot} is clear: if the cotangent bundle is globally generated then it is semiample, and if it is semiample then it is nef. We note that $\Omega^1_X$ can be globally generated but not ample, e.g. for abelian varieties. 


\section{Surfaces of general type with positive \'etale iregularity} \label{SecSurf}

If $X$ is a smooth projective variety, its irregularity is $q(X)=\dim H^0(X,\Omega^1_X)$. We also define its \'etale irregularity $\hat{q}(X)$ as the maximal $q(Y)$ where $Y$ varies over finite etale covers of $X$. Clearly $\hat{q}(X)\ge q(X)$, and the inequality can be strict.

In this section we study surfaces $X$ of general type (i.e. with Kodaira dimension $2$, or equivalently, with big canonical sheaf $\Kcal_X$) satisfying that $\hat{q}(X)>0$.  First we note:

\begin{lemma} Let $X$ be a surface of general type defined over an algebraically closed field $K$ of characteristic $0$, having $\hat{q}(X)>0$. Then $X$ contains only finitely many rational curves.
\end{lemma}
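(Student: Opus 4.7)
The plan is to pass to a finite étale cover with positive irregularity and then analyze the Albanese map on that cover. By hypothesis, there exists a finite étale cover $\pi: Y\to X$ over $K$ with $q(Y)\ge 1$, and since Kodaira dimension is preserved by étale covers, $Y$ is of general type. By Lemma \ref{LemmaRatCurvesEtale}, the assignment $C\mapsto \pi^{-1}(C)$ is an injection from rational curves of $X$ into finite unions of rational curves in $Y$, so it suffices to show that $Y$ has only finitely many rational curves. I would then consider the Albanese map $a: Y\to A=\mathrm{Alb}(Y)$, with $\dim A\ge 1$. Since $A$ contains no rational curves (Lemma \ref{LemmaAbelian}), every rational curve in $Y$ is contracted by $a$ and hence lies in a fiber. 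Because $a(Y)$ generates $A$, we have $\dim a(Y)\in\{1,2\}$, and the argument splits into two cases.

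In the case $\dim a(Y)=2$, the map $a: Y\to a(Y)$ is proper and generically finite. By upper semicontinuity of fiber dimension, the locus $S=\{y\in Y: \dim_y a^{-1}(a(y))\ge 1\}$ is a proper closed subset of $Y$, hence $\dim S\le 1$. Since $a|_S$ has fibers of dimension $\ge 1$, the image $a(S)$ has dimension $\le 0$ and is therefore a finite set. Thus only finitely many fibers of $a$ are positive-dimensional, each with finitely many irreducible components, and all rational curves in $Y$ lie among these components.

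In the case $\dim a(Y)=1$, I would Stein-factorize $a$ as $Y\xrightarrow{\varphi} B\to A$, with $\varphi$ proper with geometrically connected fibers and $B$ a smooth projective curve finite over $a(Y)$. Since $A$ contains no rational curve, the normalization of $a(Y)$ has genus $\ge 1$, hence so does $B$, so rational curves in $Y$ are contained in fibers of $\varphi$. The main obstacle is to rule out that the generic fiber of $\varphi$ be $\mathbb{P}^1$: were that the case, $Y$ would be birational to $B\times \mathbb{P}^1$ and hence ruled, contradicting $\kappa(Y)=2$. Once this is excluded, only finitely many fibers of $\varphi$ are reducible or non-smooth (the discriminant being a proper closed subset of $B$), each with finitely many components, and the finiteness of rational curves in $Y$ follows. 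All other steps are elementary fiber-dimension bookkeeping together with the fact that abelian varieties contain no rational curves, so the use of general type in this last step is the only delicate ingredient.
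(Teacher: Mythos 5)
Your proposal is correct and follows the same high-level strategy as the paper: reduce by Lemma \ref{LemmaRatCurvesEtale} to an \'etale cover with $q>0$, use the Albanese map $a$, note that rational curves must be contracted (since abelian varieties contain no rational curves), and split on whether the Albanese image is a surface or a curve. In the generically finite case your argument is the same as the paper's. The genuine difference is in the case $\dim a(Y)=1$. The paper argues directly: if there were infinitely many rational curve components among the fibres, they would all have bounded degree, giving a bounded family of rational curves covering $X$ and hence uniruledness, contradicting general type. You instead Stein-factorize $a$ through a smooth curve $B$ of genus $\ge 1$, rule out a rational generic fibre (otherwise $Y$ would be birationally ruled, contradicting $\kappa(Y)=2$), and invoke finiteness of the discriminant locus so that all rational curves lie among the components of finitely many non-smooth or reducible fibres. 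Your route buys a bit more: it avoids appealing to boundedness of families of rational curves and to the fact that a bounded covering family implies uniruledness, replacing them by the more elementary constancy of genus in a flat family of smooth curves together with generic smoothness in characteristic $0$. The only minor imprecision is citing Lemma \ref{LemmaAbelian} for absence of rational curves, which in the paper is stated over a number field $k$; the fact itself (abelian varieties over an algebraically closed field of characteristic $0$ contain no rational curves) is standard and is what is actually used. Both proofs are correct.
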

\begin{proof} By Lemma \ref{LemmaRatCurvesEtale} we may assume that $q:=q(X)>0$. Let $a:X\to A$ be the Albanese map, where $A$ is an abelian variety of dimension $q$. 

If $a$ is quasifinite then the rational curves of $X$ are contained in the (finitely many) positive dimensional fibres of $a$.

On the other hand, if $C=a(X)$ is a curve, then $C$ has geometric genus $g\ge 1$ as it is contained in $A$, and we see that each rational curve of $X$ is contained in some fibre of $a$. These rational curves are finite in number, for otherwise we would have an infinite bounded family of rational curves covering $X$, which is not possible as $X$ is of general type.
\end{proof}

Therefore, from Conjecture \ref{ConjLargestXeric} we expect that for every surface $X$ over $k$ of general type with positive \'etale irregularity, some dense open set of $X$ is xeric. Of course the Bombieri--Lang conjecture implies that if $X$ is a surface of general type (no assumption on irregularity), then some dense open set of $X$ is Mordellic, but this is wide open (unless $X$ is a sub-variety of an abelian variety \cite{Faltings2, Faltings3}) while, as we will see, in the xeric setting one can get more information.

We separate our study in two cases: $\hat{q}(X)\ge 2$ and $\hat{q}(X)=1$. In the first case we have:

\begin{theorem} Let $X$ be a smooth projective surface of general type defined over $k$ having $\hat{q}(X)\ge 2$. Then there is a properly contained Zariski closed set $Z\subseteq X$ such that $U=X-Z$ is xeric.
\end{theorem}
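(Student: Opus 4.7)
The plan is to pass to a finite \'etale cover $Y$ of $X$ with $q(Y)\geq 2$, use the Albanese map of $Y$ together with Faltings' theorems on subvarieties of abelian varieties to extract a dense xeric open $U_Y\subseteq Y$, and then descend the complement back to a proper Zariski closed subset of $X$ defined over $k$. Concretely, since $\hat q(X)\geq 2$ there is a finite \'etale cover $Y''\to X_{k^\alg}$ with $q(Y'')\geq 2$. Choose a finite Galois extension $k'/k$ with $X(k')\ne\emptyset$; by Lemma~\ref{LemmaEtalek} there is a finite \'etale Galois cover $\pi\colon Y\to X_{k'}$ defined over $k'$, with $Y$ projective geometrically irreducible, whose base change to $k^\alg$ factors through $Y''$. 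Enlarging $k'$ by a finite Galois extension if needed, we may assume $Y(k')\ne\emptyset$, so the Albanese map $a\colon Y\to A$ is defined over $k'$. The cover $Y$ is still a smooth projective surface of general type, and $q:=\dim A=q(Y)\geq q(Y'')\geq 2$. Put $V:=a(Y)\subseteq A$; by the universal property of the Albanese variety $V$ generates $A$, and so $\dim V\in\{1,2\}$.

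If $\dim V=2$, then either $q=2$ and $V=A$ (an abelian surface, which is xeric), or $q>2$ and $V$ is a proper $2$-dimensional subvariety of $A$ that cannot be a translate of an abelian subvariety (such a translate of dimension $2$ would be forced to equal $A$ by the Albanese generating property, contradicting $\dim A>2$). In the second case, Faltings' theorem on subvarieties of abelian varieties yields a proper closed $\mathrm{Sp}(V)\subsetneq V$---the union of all positive-dimensional translates of abelian subvarieties of $A$ contained in $V$---such that $V-\mathrm{Sp}(V)$ is Mordellic. In either case there is a proper closed $Z_V\subsetneq V$ with $V-Z_V$ xeric (taking $Z_V=\emptyset$ or $Z_V=\mathrm{Sp}(V)$). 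Removing also the proper closed locus $W\subsetneq V$ over which $a$ fails to be finite, Lemma~\ref{LemmaXericMorphisms}(i) applied to the finite restriction $a\colon a^{-1}(V-Z_V-W)\to V-Z_V-W$ produces a dense xeric open $U_Y\subseteq Y$.

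If $\dim V=1$, then the normalization $C$ of $V$ is a smooth projective curve whose Jacobian surjects onto $A$, so $g(C)\geq q\geq 2$. Composing $a$ with normalization yields a morphism $f\colon Y\to C$, and since $Y$ is of general type fibered over a base of genus $\geq 2$, the generic fiber of $f$ has genus $\geq 1$ (a rational generic fiber would make $Y$ ruled, contradicting general type). By the finiteness lemma opening Section~\ref{SecSurf}, $Y$ has only finitely many rational curves; let $Z_Y\subsetneq Y$ be their union and set $U_Y:=Y-Z_Y$. For any finite $L/k'$ the set $C(L)$ is finite by Faltings' theorem for curves, so $U_Y(L)$ is contained in the finite union of fibers $Y_P$ for $P\in C(L)$. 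Each irreducible component of $Y_P$ not contained in $Z_Y$ has normalization of genus $\geq 1$ and is thus xeric (Mordell--Weil if elliptic, Faltings for higher genus); sparsity transfers from the normalization to the component with an $O(1)$ loss from singular points. Summing the resulting finitely many sparsity estimates gives that $U_Y$ is xeric.

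In either case we have a proper closed $Z_Y\subsetneq Y$ with $U_Y=Y-Z_Y$ xeric. Its image $Z_0:=\pi(Z_Y)\subsetneq X_{k'}$ is a proper closed subset (as $\pi$ is finite surjective), and $\pi^{-1}(X_{k'}-Z_0)\subseteq U_Y$ is an open of a xeric variety, hence xeric; Lemma~\ref{LemmaXericMorphisms}(ii) applied to the finite \'etale restriction of $\pi$ shows that $X_{k'}-Z_0$ is xeric. Setting $Z:=\bigcup_{\sigma\in\Gal(k'/k)}\sigma(Z_0)$ gives a Galois-invariant, and so $k$-descended, proper Zariski closed subset of $X$, and $(X-Z)_{k'}\subseteq X_{k'}-Z_0$ is xeric; Lemma~\ref{LemmaBaseChange} then yields that $X-Z$ is xeric. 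The main technical obstacle is the fibered case $\dim V=1$: there one must combine the finiteness of rational curves on $Y$ with fiberwise Mordell--Weil and Faltings' bounds, aggregating over the finite set $C(L)$ for each $L/k'$ while keeping the sparsity constants uniform in $T$.
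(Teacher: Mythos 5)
Your proposal is correct and proves the theorem, but it takes a genuinely different (and heavier) route in both cases than the paper does. In the case where the Albanese image $V$ is a surface, you invoke Faltings's theorem on subvarieties of abelian varieties to remove the special set $\mathrm{Sp}(V)\subsetneq V$ and conclude $V-\mathrm{Sp}(V)$ is Mordellic; the paper avoids this deep result entirely by instead removing the finitely many positive-dimensional fibres of $a$ and then applying Lemma~\ref{LemmaXericMorphisms}(i) directly to the finite morphism $a^{-1}(U)\to U$ with $U$ a Zariski open of $A$ itself, which is xeric by the Mordell--Weil theorem (Lemma~\ref{LemmaAbelian}). In the fibered case $\dim V=1$, you remove only the rational curves, allow genus-one fibre components to remain in $U_Y$, and aggregate their fibrewise sparsity together with a normalization argument; the paper instead removes \emph{all} fibres containing a genus $0$ or $1$ curve (finitely many, since $X$ is of general type), after which $U(L)$ is contained in a finite union of genus $\ge 2$ curves and is thus actually \emph{finite} by Faltings's theorem, sidestepping any fibrewise sparsity bookkeeping. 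The careful reduction at the start (Lemma~\ref{LemmaEtalek}, Galois descent of $Z$) is essentially what the paper's terse ``by Lemma~\ref{LemmaXericMorphisms} we may assume $q(X)\ge 2$'' and ``by Lemma~\ref{LemmaBaseChange}'' compress, so that part is filling in rather than diverging. In short: your argument is sound, but the paper's handling of the surface-image case needs only Mordell--Weil rather than the general-subvariety Faltings theorem, and its handling of the curve-image case yields finiteness rather than mere sparsity, making for a leaner proof.
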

\begin{proof} By Lemma \ref{LemmaXericMorphisms} we may assume that $q:=q(X)\ge 2$. We can also assume that $X(k)\ne \emptyset$ (see Lemma \ref{LemmaBaseChange}).

Let $a:X\to A$ be the Albanese map, where $A$ is an abelian variety of dimension $q$ defined over $k$. If $a$ is quasifinite we may take $Z$ as the union of its positive-dimensional fibres, because $A$ is xeric (Lemma \ref{LemmaAbelian}) and we apply Lemma \ref{LemmaXericMorphisms}. 

Thus, let us assume that $C:= a(X)$ is a curve. As $X$ is of general type it cannot be covered by a bounded family of curves of genus $0$ or $1$, so, only finitely many fibres of $a$ contain such curves. Let $Z$ be the union of these fibres and take $U=X-Z$. 

Let $L/k$ be a finite extension. The curve $C$ spans $A$ and $\dim A=q\ge 2$, so $C$ has geometric genus $g\ge 2$. By Faltings's theorem $C(L)$ is finite and $U(L)\subseteq D(L)$ where $D$ is a finite union of curves of genus $g\ge 2$ on $X$. Thus, $U(L)$ is finite.
\end{proof}

The case $\hat{q}(X)=1$ is more difficult and we can only provide a conditional result. To state it, let us formulate a conjectural bound for the Mordell--Weil rank of abelian varieties. Here, $N_A$ stands for the norm of the conductor ideal of an abelian variety $A$ over a given number field.

\begin{conjecture} \label{ConjMWrk} Let $g\ge 1$ be an integer and let $\epsilon>0$. There is a constant $c=c(g,k,\epsilon)$ such that for all abelian varieties $A$ over $k$ of dimension $g$ we have
$$
\rk A(k) \le \epsilon \cdot\left(c + \log N_A\right).
$$
\end{conjecture}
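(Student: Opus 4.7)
The plan is to reduce the desired uniform bound on Mordell--Weil ranks to a question about the behaviour of the $L$-function of $A$ at the central point, and then to control that behaviour by standard analytic methods. The first step is to invoke the (rank part of the) Birch and Swinnerton-Dyer conjecture, giving $\rk A(k)=\ord_{s=1}L(A/k,s)$, so that the task becomes bounding the multiplicity of the central zero of $L(A/k,s)$ in terms of the conductor $N_A$.

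Next, one would assume that $L(A/k,s)$ arises as the standard $L$-function of a cuspidal automorphic representation on $\mathrm{GL}_{2g}$ over $k$ (expected from the Langlands program and available in many cases through potential automorphy). This provides a functional equation $s\mapsto 2-s$, entire analytic continuation, and an analytic conductor polynomial in $N_A$ (with exponent depending only on $g$ and $k$). Assuming in addition GRH for $L(A/k,s)$, Weil's explicit formula applied to a non-negative test function supported in a shrinking interval with non-negative Fourier transform yields
\[
\ord_{s=1}L(A/k,s)\ \ll_{g,k}\ \frac{\log N_A}{\log\log N_A},
\]
which is considerably stronger than the $\epsilon(c+\log N_A)$ required by the conjecture, and in particular delivers it.

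The main obstacle is that this argument rests on three unproved pillars: BSD, automorphy of the $L$-function of $A$, and GRH for that $L$-function. Of these, GRH looks the most intractable; potential automorphy is steadily advancing, and BSD for the rank, while central, is at least attacked by many independent methods. An unconditional alternative route would bypass $L$-functions entirely and instead exploit $\rk A(k)\le \dim_{\F_\ell}\Sel_\ell(A/k)$, controlling the $\ell$-Selmer group via the $\ell$-torsion of a ray class group of $k(A[\ell])$ ramified only at primes above $\ell N_A$. Crude versions of this strategy yield a bound of the shape $\rk A(k)=O_{g,k}(\log N_A)$, and extracting the additional $\epsilon$-saving would require averaging over slowly growing auxiliary primes $\ell$, which in turn demands effective bounds on $\ell$-ranks of class groups that are themselves conjectural. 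In short, Conjecture \ref{ConjMWrk} appears out of reach with present technology even conditionally, and a satisfactory proof will probably await substantial progress either in the analytic theory of automorphic $L$-functions or in the understanding of $\ell$-parts of class groups of number fields of bounded degree and controlled ramification.
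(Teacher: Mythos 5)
You have correctly recognized that the statement is a \emph{conjecture}, not a theorem: the paper does not prove it and offers no proof for you to match. What the paper does offer is a short motivational discussion, which your proposal largely reproduces and then extends. Specifically, the paper points to the unconditional descent bound $\rk A(k)\ll_{g,k}\log N_A$ of Ooe--Top and observes that any asymptotic improvement of this would yield the conjecture; it also mentions the folklore uniform boundedness conjecture as a (much stronger) sufficient condition. Your ``alternative route'' via $\ell$-Selmer groups and class groups of $k(A[\ell])$ is essentially the same descent circle of ideas the paper cites, and you reach the same $O_{g,k}(\log N_A)$ bound while correctly noting that the extra $\epsilon$-saving is the genuine difficulty.

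Where your proposal adds something the paper omits is the conditional analytic route: under BSD (rank part), automorphy of $L(A/k,s)$ on $\mathrm{GL}_{2g}$, and GRH, explicit-formula arguments give $\ord_{s=1}L(A/k,s)\ll_{g,k}\log N_A/\log\log N_A$, which is precisely the kind of asymptotic improvement the paper says would suffice. This is a legitimate and well-known conditional bound (going back to Mestre and Brumer in the elliptic-curve case), and it is worth noting that it delivers the conjecture in a stronger form. Your honest assessment that all three hypotheses are currently out of reach is accurate, and your conclusion --- that the conjecture is not provable with present technology even conditionally without GRH --- is consistent with the paper's decision to state it only as a conjecture with motivation rather than as a theorem. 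In short: no gap, because there is nothing to prove; your discussion is a reasonable, and in one respect more detailed, elaboration of the same motivation the paper gives.
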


Although we have not found a precise formulation of this exact conjecture in the literature, we can mention the following motivation:

\begin{itemize}
\item By descent bounds \cite{OoeTop}, the bound $\rk A(k)\ll_{g,k} \log N_A$ is known unconditionally. Thus, any asymptotic improvement on this bound implies the conjecture.

\item It is a folklore question whether there is a uniform bound $c(g,k)$ depending only on the dimension $g$ and the number field $k$ such that for all abelian varieties $A$ over $k$ with $\dim A=g$ one has $\rk A(k)\le c(g,k)$. The case $g=1$ has received considerable attention, see for instance \cite{PPVW} and the references therein for a discussion on this uniform boundedness conjecture, as well as for a survey on the proved and conjectural bounds for the ranks of elliptic curves. There the reader will find explicit references to (stronger forms of) Conjecture \ref{ConjMWrk} for elliptic curves.
\end{itemize}

For the case $\hat{q}(X)=1$ we prove:

\begin{theorem}\label{Thmq1} Assume Conjecture \ref{ConjMWrk}. Let $X$ be a smooth projective surface of general type defined over $k$ having $\hat{q}(X)=1$. Then there is a properly contained Zariski closed set $Z\subseteq X$ such that $U=X-Z$ is xeric.
\end{theorem}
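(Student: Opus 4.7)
The plan parallels the proof of the $\hat{q}(X) \ge 2$ case: pass to an \'etale cover of $X$ with positive irregularity and fiber over the Albanese. The new obstacle is that the base is now an elliptic curve with infinitely many rational points over $L$, so one must control the contribution of the fibers, and this is where Conjecture \ref{ConjMWrk} enters. By Lemma \ref{LemmaBaseChange} we may enlarge $k$ so that $X(k) \ne \emptyset$. Since $\hat{q}(X) = 1$, some \'etale cover $X' \to X_{k^{\alg}}$ has $q(X') = 1$, and Lemma \ref{LemmaEtalek} yields a finite \'etale Galois morphism $\pi : Y \to X$ defined over $k$ whose base change factors through $X'$. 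Since irregularity is non-decreasing under \'etale covers and $\hat{q}(Y) \le \hat{q}(X) = 1$, we get $q(Y) = 1$. After a further finite extension of $k$ we may assume $Y(k) \ne \emptyset$; the Stein factorization of the Albanese morphism of $Y$ then yields a surjection $a : Y \to E$ with geometrically connected fibers to an elliptic curve $E/k$. Since $Y$ is of general type, the generic fiber of $a$ must have genus $g \ge 2$ (else $Y$ would be a ruled or elliptic fibration over $E$, contradicting $\kappa(Y)=2$). Let $\Sigma \subsetneq E$ be the finite set of points whose fiber is singular, reducible, or contains a component of geometric genus $<2$; setting $U = X \setminus \pi(a^{-1}(\Sigma))$ and $V = \pi^{-1}(U)$, by Lemma \ref{LemmaXericMorphisms}(ii) it suffices to prove that $V$ is xeric.

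Fix a finite extension $L/k$, an ample line sheaf $\Bcal$ on $V$, and $\epsilon > 0$. Fibering,
$$
N_\Bcal(V, L, T) = \sum_{p \in E(L) \setminus \Sigma} \# \bigl( F_p(L) \cap \{H_\Bcal \le T\} \bigr), \qquad F_p := a^{-1}(p).
$$
By functoriality of heights the sum is supported on $p$ with $H_E(p) \le T^{D}$ for some constant $D$, and N\'eron--Tate counting on $E(L)$ gives $\ll_L (\log T)^{\rk E(L)/2}$ such points. For each such $p$, $F_p$ is a smooth irreducible curve of genus $g \ge 2$ over $L$, so R\'emond's bound (on points of curves in terms of the rank of the Jacobian) yields $\#F_p(L) \le c_1 c_2^{r_p}$, with $r_p = \rk J_p(L)$ and $c_1, c_2$ depending only on $g$ and $[L:\Q]$. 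Applying Conjecture \ref{ConjMWrk} to $J_p$ (an abelian variety of dimension $g$ over $L$), for every $\delta > 0$ we have $r_p \le \delta(c_3 + \log N_{J_p})$ with $c_3 = c_3(g, L, \delta)$.

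The crux of the argument, and the main obstacle, is to establish a bound of the form
$$
\log N_{J_p} \le c_4 + c_5 \log H_E(p) \qquad (p \in E(L) \setminus \Sigma),
$$
with constants depending on $L, g, a$. Geometrically, the relative Jacobian $\Jcal \to E \setminus \Sigma$ is an abelian scheme; bad reduction of $J_p$ at a finite place $v$ of $L$ occurs only when $p$ specializes modulo $v$ to the discriminant divisor of the family extended to an integral model of $E$, or to a fixed finite set of bad primes. Arithmetic intersection theory then bounds the total contribution by $h_D(p) + O(1) \ll \log H_E(p) + O(1)$ for a suitable divisor $D$ on $E$; equivalently, the moduli morphism $E \setminus \Sigma \to \Mcal_g$ controls the Faltings height as $h_F(J_p) \ll \log H_E(p) + O(1)$, after which one invokes the standard comparison $\log N_{J_p} \ll h_F(J_p) + O(1)$ from the theory of semistable reduction. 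Granting this, choose $\delta$ so small that $D \delta c_5 \log c_2 < \epsilon$; combining the three estimates gives
$$
N_\Bcal(V, L, T) \ll_{\epsilon, L} (\log T)^{\rk E(L)/2} \cdot T^\epsilon \ll_{\epsilon, L} T^{2\epsilon},
$$
so that $V$, and hence $U$, is xeric.
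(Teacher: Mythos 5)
Your proposal follows the paper's argument in all essentials: reduce via an \'etale cover to $q=1$, fibre over the elliptic Albanese base, on each good fibre apply a uniform Mordell bound in terms of the Mordell--Weil rank of the Jacobian together with Conjecture \ref{ConjMWrk}, and bound the conductor of $J_p$ by a power of the height of $p$ via the divisor on $E$ supporting the bad fibres. One correction: the bound $\#F_p(L)\le c_1c_2^{r_p}$ with $c_1,c_2$ depending \emph{only} on $g$ and $[L:\Q]$ is the Dimitrov--Gao--Habegger theorem (Theorem \ref{ThmDGH} in the paper), not R\'emond's; R\'emond's earlier estimate carries an additional dependence on a (theta or Faltings) height of the Jacobian, so using it as you state would require a further step bounding that height in terms of $H_E(p)$, which your Faltings-height discussion of the conductor would in fact also supply, but the clean statement you use is DGH's.
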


The key to this result is the following theorem by Dimitrov--Gao--Habegger \cite{DGH}:

\begin{theorem}\label{ThmDGH} Let $g\ge 2$. There is a constant $b=b(g,k)>1$ depending only on $g$ and $k$ such that for every smooth projective geometrically irreducible curve $C$ over $k$ of genus $g$, we have
$$
\#C(k) \le b^{1+r(C/k)}
$$
where $r(C/k)$ is the Mordell--Weil rank of the jacobian $J_C$ of $C$ over $k$.
\end{theorem}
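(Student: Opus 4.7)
The plan is to follow the strategy of Dimitrov--Gao--Habegger, which refines the classical Faltings--Vojta method with a uniform height lower bound on non-exceptional points of curves embedded in their Jacobians.

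\medskip

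\emph{Setup.} First I would reduce to the case $C(k)\neq\emptyset$ (otherwise there is nothing to prove) and pick a rational point $P_0\in C(k)$ to obtain the Abel--Jacobi embedding $\iota_{P_0}:C\hookrightarrow J=J_C$ defined over $k$. This identifies $C(k)$ with a subset of the finitely generated abelian group $J(k)=J(k)_{\mathrm{tors}}\oplus\Z^r$, where $r=r(C/k)$. Using a symmetric ample line sheaf on $J$ (e.g.\ a twice theta divisor after pullback), let $\hat h$ denote the N\'eron--Tate height, which induces a Euclidean structure on $J(k)\otimes\R$.

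\medskip

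\emph{Partition into large and small points.} I would fix a threshold $\tau=\tau(g,k)>0$ and split
\[
C(k)=\Sigma_{\mathrm{small}}\sqcup\Sigma_{\mathrm{large}},\qquad \Sigma_{\mathrm{small}}=\{x\in C(k): \hat h(x)\le \tau\},\quad \Sigma_{\mathrm{large}}=\{x\in C(k):\hat h(x)>\tau\}.
\]
The large points will be controlled by Vojta's method, and the small points will be controlled by a uniform height lower bound.

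\medskip

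\emph{Large points via uniform Vojta and Mumford.} The first step is to invoke a uniform Vojta inequality on $C\hookrightarrow J$: there exist constants $\kappa_1=\kappa_1(g,k)\in(0,1)$ and $\kappa_2=\kappa_2(g,k)>0$ such that any pair $x,y\in C(k^{\mathrm{alg}})$ with $\hat h(x),\hat h(y)$ sufficiently large and $\hat h(y)\ge\kappa_2\hat h(x)$ satisfies the angular inequality $\langle x,y\rangle\le \kappa_1\,\|x\|\|y\|$ in the N\'eron--Tate Euclidean structure. Combined with Mumford's gap principle (which spreads out any sequence of points of comparable height into a thin angular cone), this yields that the number of points of $\Sigma_{\mathrm{large}}$ in any dyadic annulus $\{T\le\hat h\le 2T\}$ is bounded by a constant $\gamma(g)^r$ depending only on $g$ via a sphere-packing count in $\R^r$. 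A Mumford-style descent across geometric sequences of annuli then gives $|\Sigma_{\mathrm{large}}|\le b_1(g,k)^{1+r}$.

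\medskip

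\emph{Small points via the uniform N\'eron--Tate lower bound.} The crux is to bound $|\Sigma_{\mathrm{small}}|$. Here I would invoke the Dimitrov--Gao--Habegger ``new gap principle'', proved using Gao--Habegger's height inequality on the universal abelian variety over the moduli space $\mathcal A_g$: there exist positive constants $c(g,k),\epsilon(g,k)$ such that every $x\in C(k)$ outside a controlled exceptional subset $E\subset C(k)$ of cardinality $\le c(g,k)^r$ satisfies $\hat h(x)\ge \epsilon(g,k)$. Choosing $\tau<\epsilon(g,k)$ in the partition above forces $\Sigma_{\mathrm{small}}\subseteq E$, so $|\Sigma_{\mathrm{small}}|\le c(g,k)^r$. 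Combining with the large case gives $|C(k)|\le b(g,k)^{1+r}$ for a suitable $b=b(g,k)>1$, which is the stated bound.

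\medskip

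\emph{Main obstacle.} The truly hard ingredient is the uniform lower bound on $\hat h$ in the small-point step; the large-point estimate is a uniform but essentially classical Vojta--Mumford argument. The height lower bound requires the full force of the Gao--Habegger height inequality on the universal abelian variety, whose proof rests on functional transcendence (Ax--Schanuel for mixed Shimura varieties, due to Gao) together with a careful analysis of the Betti rank of the curve viewed inside the universal Jacobian. Importing this as a black box reduces the rest to the standard Vojta bookkeeping described above.
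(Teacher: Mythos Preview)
The paper does not give a proof of this statement: Theorem~\ref{ThmDGH} is stated with attribution to Dimitrov--Gao--Habegger \cite{DGH} and then used as a black box in the proof of Theorem~\ref{Thmq1}. So there is no ``paper's own proof'' to compare against; the relevant comparison is with \cite{DGH} itself.

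Your sketch does follow the architecture of \cite{DGH}: Abel--Jacobi embed, split into small and large N\'eron--Tate height, handle large points by a uniform Vojta--Mumford argument (as in R\'emond's work), and handle small points via the Gao--Habegger height inequality on the universal abelian variety. That is the right global picture. One point is imprecise: in the small-point step you describe a single exceptional set $E$ of size $\le c(g,k)^r$ outside which $\hat h(x)\ge\epsilon(g,k)$. In \cite{DGH} the height inequality rather yields a lower bound of the shape $\hat h(x)\ge c_1\cdot\max(1,h_{\mathrm{Fal}}(J))$ outside an exceptional set whose cardinality is bounded by a constant depending only on $g$ (not exponentially on $r$); the exponential-in-$r$ contribution for small points then comes from a separate sphere-packing count \`a la Mumford--R\'emond in the Euclidean ball of radius $\sqrt{c_1 h_{\mathrm{Fal}}(J)}$. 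Your threshold $\tau$ should accordingly depend on the curve (through $h_{\mathrm{Fal}}(J)$), not just on $(g,k)$. With that correction, the outline matches \cite{DGH}; for the purposes of the present paper, simply citing \cite{DGH} (as the paper does) is all that is required.
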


\begin{proof}[Proof of Theorem \ref{Thmq1}] By Lemma \ref{LemmaXericMorphisms} we may assume that $q(X)=1$. The Albanese variety of $X$ is an elliptic curve $E$ and (possibly after replacing $k$ by a finite extension) we obtain the Albanese map $f:X\to E$ over $k$. It is surjective with smooth generic fibre (see Exercise 10.40(b) of \cite{GW}). 

The fibres of $f$ are geometrically connected: for otherwise, by applying Stein's factorization theorem (possibly after base change to a finite extension of $k$) we would have a non-trivial factorization $X\to B\to E$ where $B$ is a curve, which necessarily is an elliptic curve as $q=1$. This contradicts the universal property of the Albanese map.

Thus, the generic fibre of $f$ is a smooth projective geometrically irreducible curve of genus $g\ge 2$ ($X$ is of general type, so, it cannot be covered by a bounded family of genus $0$ or $1$ curves).

We can choose a relatively ample line sheaf $\Lcal$ for $f$ on $X$, as well as an ample line sheaf $\Bcal$ on $E$, such that $\Acal=\Lcal\otimes f^*\Bcal$ is ample on $X$. 
We adjust normalizations such that we have the following relations of heights for $x\in X(k^{\alg})$:
$$
H_{f^*\Bcal} (x) = H_\Bcal (f(x))\quad\mbox{ and }\quad H_{\Acal}(x) = H_\Lcal(x)\cdot H_{f^*\Bcal} (x)
$$
and furthermore, each of these heights is bounded below by $1$.

Let $Z$ be the union of the bad fibres of $f:X\to E$ and $U=X-Z$. We fix a finite extension $L/k$. We have
$$
\begin{aligned}
N_\Acal(U,L,T)&=\#\{x\in U(L) : H_\Lcal(x)\cdot H_{\Bcal} (f(x))\le T\}\\
&\le \sum_{\substack{y\in E(L)\\ H_{\Bcal}(y)\le T}} \# X_y(L)\quad \mbox{ because }H_{\Lcal}(x)\ge 1,
\end{aligned}
$$ 
where each $X_y$ is a smooth geometrically irreducible projective curve over $L$ of genus $g$. By Theorem \ref{ThmDGH}
$$
\# X_y(L) \le b^{1 + r(X_y/L)} 
$$
with $b=b(g,L)$ as in the theorem. Assuming Conjecture \ref{ConjMWrk}, for every $\epsilon>0$ we have
$$
r(X_y/L) \le \epsilon\cdot ( c + \log N_{J_y})
$$
for certain constant $c$ independent of $y$, where $J_y$ is the jacobian of $X_y$. We observe that $N_{J_y}$ can be bounded using the norm of places of bad reduction of $X_y$ with bounded exponents, which in turn can be bounded by a power of the (multiplicative) height of $y$ relative to the divisor on $E$ supporting the bad fibres of $f:X\to E$. Thus one has
$$
\log N_A \le c'\cdot (1+ \log H_\Bcal (y))
$$
where the constant $c'$ is independent of $y$ and $\epsilon$. Putting all together, we find
$$
\begin{aligned}
N_\Acal(U,L,T) &\ll_{\epsilon} \sum_{\substack{y\in E(L)\\ H_{\Bcal}(y)\le T}} H_\Bcal (y)^{c' \epsilon} \\ 
&\le T^{c' \epsilon} \cdot N_{\Bcal}(E,L,T) \ll_\epsilon T^{(c'+1)\epsilon}
\end{aligned}
$$
where the last bound holds because $E$ is xeric, see Lemma \ref{LemmaAbelian}. This proves that $U$ is xeric.
\end{proof}


\section{Positivity of the diagonal} \label{SecDiag}
 
 The goal of this section is to introduce a new numerical invariant $\hat{s}(X,\Acal)\ge 0$ ($X$ a variety defined over $k$ as always, and $\Acal$ an ample line sheaf on $X$) and to prove that when it vanishes, the variety $X$ is xeric. This new invariant is, in some sense, a limit measure of positivity of the diagonal of $X\times X$ as we vary over suitable \'etale covers of $X$.

For a smooth projective variety geometrically irreducible $M$ over a field $K$ and an ample line sheaf $\Acal$ on $M$, we define $\Acal_j=\pi_j^*\Acal$ for $j=1,2$, where $\pi_j:M\times M\to M$ is the $j$-th projection. The diagonal of $M$ is $\Delta_M\subseteq M\times M$ and we let $b: B_M\to M\times M$ be the blow-up along $\Delta_M$, with exceptional divisor $E_M$. The $s$-invariant (cf. Section 5.4 in \cite{Lazarsfeld}) of the diagonal relative to $\Acal$ is
$$
s(M,\Acal)=\inf \{t\in\Q : t\cdot b^*(\Acal_1\otimes \Acal_2) -E\mbox{ is nef on }B_M \}
$$
where $t\cdot b^*(\Acal_1\otimes \Acal_2)$ is seen as a $\Q$-divisor class on $B_M$. It is a measure of the positivity of $\Delta_M$ relative to $\Acal$. We note that it remains the same under base change to a field extension of $K$.

The following result is due to Tanimoto \cite{Tanimoto}.
\begin{theorem}[Tanimoto]\label{ThmTanimoto} Let $X$ be a smooth projective geometrically irreducible variety over $k$ of dimension $n$. Let $\Acal$ be an ample line sheaf on $X$. Let $\epsilon>0$. Then, as $T\to\infty$ we have
$$
N_\Acal(X,k,T)\ll_\epsilon T^{2[k:\Q]ns+\epsilon}
$$
where $s=s(X,\Acal)$ and the implicit constant is independent of $T$.
\end{theorem}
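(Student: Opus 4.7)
I would combine the positivity of the diagonal encoded in $s=s(X,\Acal)$ with a Thue--Siegel-type repulsion between distinct rational points on $X$, and then count via a Schanuel-style argument across the archimedean places of $k$. Fix rational numbers $t>s$ and $\delta>0$: by definition of the $s$-invariant, the $\Q$-class $tb^{*}(\Acal_1\otimes\Acal_2)-E$ is nef on $B_X$, and adding $\delta\cdot b^{*}(\Acal_1\otimes\Acal_2)$ (pullback of an ample class) makes it big and nef. Using $b_{*}\Ocal_{B_X}(-NE)=\Ical_{\Delta}^{N}$ together with vanishing of higher direct images, one identifies $H^0(B_X,\,Mb^{*}(\Acal_1\otimes\Acal_2)-NE)$ with $H^0(X\times X,\,(\Acal_1\otimes\Acal_2)^{\otimes M}\otimes\Ical_{\Delta}^{\otimes N})$ for $M=\lceil(t+\delta)N\rceil$, and for $N$ large and divisible enough this space is nonzero; equivalently, $X\times X$ admits a global section of $(\Acal_1\otimes\Acal_2)^{\otimes M}$ vanishing to order at least $N$ along $\Delta_X$.

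Next, fix an adelic metric on $\Acal$, inducing place-wise distance functions $d_v$ on $X(k_v)$. For two distinct $x,y\in X(k)$ I would pick a section $\sigma$ from a basis of the linear system above that does not vanish on $\{x\}\times\{y\}$, evaluate it at $(x,y)$, and apply the product formula to the resulting nonzero algebraic number after a compatible choice of local trivializations. The $N$-fold vanishing of $\sigma$ along $\Delta_X$, combined with uniform boundedness of non-archimedean contributions, yields a repulsion inequality of the shape
\[
\prod_{v\mid\infty}d_v(x,y)^{N}\;\gg_{X,\Acal}\;\bigl(H_{\Acal}(x)\,H_{\Acal}(y)\bigr)^{-M}.
\]

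Finally, I would count by embedding $X(k)\hookrightarrow\prod_{v\mid\infty}X(k_v)$ and invoking a Schanuel-style lattice-point argument. Heuristically, the repulsion forces distinct rational points of height $\le T$ to sit at pairwise product-distance $\gg T^{-2(t+\delta)}$ in the archimedean product, and accounting for the $[k:\Q]$-dimensional density of $X(k)$ inside its archimedean completions (as in the classical counts for $\Pro^n(k)$) converts this separation into the bound $N_\Acal(X,k,T)\ll_\epsilon T^{2[k:\Q]n(t+\delta)+\epsilon}$. Letting $t\downarrow s$ and $\delta\downarrow 0$ then produces the stated exponent $2[k:\Q]ns+\epsilon$.

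\textbf{Main obstacle.} The heart of the argument is the second step: turning the geometric vanishing of $\sigma$ along $\Delta_X$ into a clean arithmetic inequality relating heights and $v$-adic distances. This requires (i)~setting up a fixed adelic metric on $\Acal$ so that the product formula applies to $\sigma(x,y)$ after a compatible choice of local trivializations, (ii)~ranging over the whole linear system produced in the first step so that for every pair $(x,y)$ some section does not vanish on $\{x\}\times\{y\}$, which forces one to control the base loci of these sections, and (iii)~uniformly absorbing the non-archimedean contributions into the implicit constant so that only the archimedean distances remain. A related subtlety is to make the $[k:\Q]$-dimensional lattice density of $X(k)\hookrightarrow\prod_{v\mid\infty}X(k_v)$ appear correctly in the final exponent, rather than the smaller exponent that would come from the real dimension of a single archimedean completion of $X$.
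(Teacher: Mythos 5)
The paper offers no proof of this statement: it is attributed directly to Tanimoto and the reader is referred to \cite{Tanimoto}, the only added content being the remark that Tanimoto normalizes heights to $k$ whereas the paper normalizes to $\Q$, which is what produces the factor $[k:\Q]$ in the exponent. So there is no in-paper argument against which your sketch can be checked step by step, and any serious comparison has to be with Tanimoto's original paper.

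With that caveat, your outline does capture the expected shape of a gap-principle proof of this type (sections of $(\Acal_1\otimes\Acal_2)^{\otimes M}$ vanishing to order $N$ along $\Delta_X$, product formula, archimedean counting), and the exponent bookkeeping does come out to $2[k:\Q]ns$ after some care. Two points to sharpen beyond what you already flag. First, the repulsion you extract is a lower bound on the \emph{product} $\prod_{v\mid\infty}d_v(x,y)^{[k_v:\R]}$ of local distances, which is not a metric on $\prod_{v\mid\infty}X(k_v)$, so ``Schanuel-style lattice argument'' is the wrong picture; the correct device is a dyadic covering of $\prod_{v\mid\infty}X(k_v)$ by boxes whose sidelength $\epsilon_v$ may vary with $v$ subject to $\prod_v\epsilon_v^{[k_v:\R]}\le T^{-2t[k:\Q]}$, after which the box count is $\asymp T^{2[k:\Q]nt}$ \emph{independently} of how the $\epsilon_v$ are distributed among the places, and a logarithmic number of dyadic choices is absorbed into $T^{\epsilon}$. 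Second, the obstacle you name---making the product formula talk to a fixed adelic metric, absorbing the finite-place contributions uniformly, and controlling the base locus so that some section is nonzero at every off-diagonal pair $(x,y)$---is genuinely the heart of the matter and cannot be waved away; until it is resolved this is a plan, not a proof. For the actual argument, and to confirm whether Tanimoto's proof really proceeds along these lines rather than by a different route, one must consult \cite{Tanimoto}, since the present paper does not reprove the theorem.
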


Note that Tanimoto considers heights normalized to $k$, while our heights are normalized to $\Q$ which explains the factor $[k:\Q]$ in the exponent.

For $M$ and $\Acal$ over a field $K$ as before, let us define another invariant. We let $\hat{s}(M,\Acal)$ be the infimum of the numbers $s(M',f^*\Acal)$ where $f:M'\to M$ varies over all finite  \'etale Galois morphisms defined over $K$ with $M'$ geometrically irreducible. We obtain the following improvement of Tanimoto's thoerem:

\begin{theorem}\label{ThmDiagonalEtale} Let $X$ be a smooth projective geometrically irreducible variety over $k$ of dimension $n$. Let $\Acal$ be an ample line sheaf on $X$. Let $L/k$ be a finite extension and $\epsilon>0$. Then, as $T\to\infty$ we have
$$
N_\Acal(X,L,T)\ll_\epsilon T^{2[L:\Q]n\hat{s}+\epsilon}
$$
where $\hat{s}=\hat{s}(X, \Acal)$ and the implicit constant is independent of $T$. 
\end{theorem}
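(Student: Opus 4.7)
The plan is to combine Tanimoto's Theorem \ref{ThmTanimoto} with the étale twists supplied by Lemma \ref{LemmaRatPtsEtaleTwists}, in order to pass from the geometric $s$-invariant to its étale-cover infimum $\hat{s}(X,\Acal)$.

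Fix $\epsilon>0$ together with an auxiliary parameter $\delta>0$ to be chosen at the end. By the very definition of $\hat{s}$, I pick a finite étale Galois morphism $f:Y\to X$ over $k$, with $Y$ projective and geometrically irreducible, satisfying $s(Y,f^*\Acal)<\hat{s}+\delta$. After base-changing $f$ to $L$ (which does not affect the $s$-invariant, by the base-change remark following its definition), I apply Lemma \ref{LemmaRatPtsEtaleTwists} to the Galois étale cover $Y_L\to X_L$ over $L$. This yields finitely many twists $f_i:Y_i\to X_L$, each geometrically irreducible over $L$, with
$$X(L)=X_L(L)=\bigcup_{i=1}^m f_i(Y_i(L)).$$
Normalizing heights so that $H_{f_i^*\Acal}(y)=H_\Acal(f_i(y))$ for $y\in Y_i(k^{\alg})$, I obtain the upper bound $N_\Acal(X,L,T)\le \sum_{i=1}^m N_{f_i^*\Acal}(Y_i,L,T)$.

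Next, I claim that $s(Y_i,f_i^*\Acal)=s(Y,f^*\Acal)$ for every $i$: since each twist $Y_i$ becomes isomorphic to $Y_{k^{\alg}}$ after base change to $k^{\alg}$, the blow-up of $Y_i\times Y_i$ along its diagonal together with the relevant pullback and exceptional divisor classes are identified with the corresponding data on $Y_{k^{\alg}}$, and the claim follows by base-change invariance of the $s$-invariant. Applying Tanimoto's Theorem \ref{ThmTanimoto} to each $Y_i$ over $L$ (using $\dim Y_i=n$) with parameter $\epsilon/2$ yields
$$N_{f_i^*\Acal}(Y_i,L,T)\ll_\epsilon T^{2[L:\Q]n(\hat{s}+\delta)+\epsilon/2}.$$
Summing over the finitely many twists (their number is independent of $T$) and choosing $\delta:=\epsilon/(4[L:\Q]n)$ absorbs the $\delta$-contribution into $\epsilon/2$, producing the desired estimate.

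The main point requiring care is the base-change invariance of the $s$-invariant across twists; once this is cleanly verified, the argument is just a direct composition of the twist-counting lemma with Tanimoto's bound, and no delicate arithmetic enters.
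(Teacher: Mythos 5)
Your proposal is correct and follows essentially the same route as the paper: both pick a Galois \'etale cover $f:Y\to X$ over $k$ with $s(Y,f^*\Acal)$ close to $\hat{s}$, base change to $L$, invoke Lemma~\ref{LemmaRatPtsEtaleTwists} to cover $X(L)$ by images of twists $Y_i$, observe that the $s$-invariant of each twist equals that of $Y$ by passing to $k^{\alg}$, and then apply Tanimoto's Theorem~\ref{ThmTanimoto} to each $Y_i$. The only cosmetic difference is that you spell out the $\delta$-argument absorbing the slack $s_Y<\hat{s}+\delta$ into $\epsilon$, which the paper leaves implicit in the phrase ``choosing $f$ such that $s_Y$ approaches $\hat{s}$.''
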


\begin{corollary} \label{CorosetaleXeric} Let $X$ be a smooth projective geometrically irreducible variety over $k$. If for some ample line sheaf $\Acal$ on $X$ we have that $\hat{s}(X, \Acal)=0$, then $X$ is xeric. 
\end{corollary}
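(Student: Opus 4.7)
The plan is to simply unwrap definitions and invoke Theorem \ref{ThmDiagonalEtale} directly. The statement $\hat{s}(X,\Acal)=0$ means that for every $\delta>0$ we can find a finite \'etale Galois cover $f:X'\to X$ defined over $k$ with $X'$ geometrically irreducible such that $s(X',f^*\Acal)<\delta$. This is precisely the hypothesis feeding into the theorem, but we never have to re-enter the proof of the theorem.

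First I would fix an arbitrary finite extension $L/k$, as this is what one must control in order to verify xericity. The definition of xericity requires $X(L)$ to be sparse in $X$, with respect to \emph{some} (equivalently, \emph{any}) ample line sheaf; since $\Acal$ is ample, it suffices to bound $N_\Acal(X,L,T)$. Applying Theorem \ref{ThmDiagonalEtale} to $X$, $\Acal$, and the extension $L/k$, we obtain
$$
N_\Acal(X,L,T)\ll_\epsilon T^{2[L:\Q]n\hat{s}+\epsilon}\qquad \text{as }T\to\infty,
$$
where $n=\dim X$ and $\hat{s}=\hat{s}(X,\Acal)$. Substituting $\hat{s}=0$, the exponent collapses to $\epsilon$, so for every $\epsilon>0$ we get $N_\Acal(X,L,T)\ll_\epsilon T^\epsilon$. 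This is exactly the definition of sparsity of $X(L)$ in $X$, and since $L/k$ was arbitrary, $X$ is xeric by definition.

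Strictly speaking, there is nothing delicate to worry about: the heights are normalized to $\Q$, the invariant $\hat{s}$ is intrinsic to $(X,\Acal)/k$ (it is defined by a geometric condition on $B_{X'}$, hence does not depend on base change to $L$), and no positivity of the implicit constant is required. Thus the only ``main obstacle'' was already absorbed into the proof of Theorem \ref{ThmDiagonalEtale}; here the deduction is essentially immediate and occupies only a few lines.
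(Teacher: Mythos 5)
Your proposal is correct and follows essentially the same (indeed, the only sensible) route: substitute $\hat{s}(X,\Acal)=0$ into the exponent of Theorem \ref{ThmDiagonalEtale} for each finite extension $L/k$, which gives $N_\Acal(X,L,T)\ll_\epsilon T^\epsilon$, i.e.\ sparsity of $X(L)$, and hence xericity. The paper states this as an immediate corollary for exactly the same reason; your remarks on height normalization and on $\hat{s}$ being an invariant of $(X,\Acal)/k$ are accurate but not strictly necessary for the deduction.
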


The relevance of replacing $s(X, \Acal)$ by $\hat{s}(X, \Acal)$ is that the former never vanishes, while $\hat{s}(X, \Acal)$ can actually vanish. For instance, taking the multiplication-by-$m$ maps one sees that if $X$ is an abelian variety and $\Acal$ is ample and symmetric, then $\hat{s}(X, \Acal)=0$. Another example where $\hat{s}(X, \Acal)$ vanishes is given by compact ball quotients; this will be analyzed in detail in the next section.

\begin{proof}[Proof of Theorem \ref{ThmDiagonalEtale}] Let $f:Y\to X$ be a finite \'etale Galois morphism over $k$ with $Y$ geometrically connected. It suffices to prove that 
$$
N_\Acal(X,k,T)\ll_\epsilon T^{2[k:\Q]ns_Y+\epsilon}
$$
where $s_Y=s(Y,f^*\Acal)$. The result is then obtained by  choosing $f:Y\to X$ such that $s_Y$ approaches $\hat{s}$, and replacing $k$ by $L$ and $f:Y\to X$ by its base change to $L$; this suffices as the $s$-invariant remains the same under such a base change.

We apply Lemma \ref{LemmaRatPtsEtaleTwists} to get morphisms $f_i:Y_i\to X$ for $i=1,...,m$ (for some finite $m$) defined over $k$, that are twists of $f:Y\to X$, and such that 
$$
X(k) = \bigcup_{i=1}^m f_i(Y_i(k)).
$$

Let $\Lcal_i=f_i^*\Acal$. Then $s_Y=s(Y,f^*\Acal)=s(Y_i,\Lcal_i)$ for each $i$ (we compare them after base change to $k^{\alg}$, where all the $f_i$ become $f$). By functoriality of heights, we then have
$$
N_\Acal(X,k,T)\ll \sum_{i=1}^m N_{\Lcal_i}(Y_i,k,T) 
$$
while Theorem \ref{ThmTanimoto} gives
$$
N_{\Lcal_i}(Y_i,k,T) \ll_\epsilon T^{2[k:\Q]ns_Y+\epsilon}.
$$
The result follows.
\end{proof}


\section{Compact ball quotients} \label{SecBall}

In this section we prove the Main Conjecture \ref{MainConj} for varieties $X$ over $k$ whose complex analytic space (for some embedding of $k$ into $\C$) is a compact ball quotient.

A compact complex manifold $M$ of dimension $n$ will be called \emph{compact ball quotient} if there is a co-compact torsion-free lattice $\Gamma$ in the holomorphic automorphisms group of the $n$-dimensional open unit ball $\B_n\subseteq \C^n$, such that $M$ is biholomorphic to $\B_n/\Gamma$. 

The main result of this section is:

\begin{theorem}[The Main Conjecture holds for compact ball quotients]\label{ThmBall} Let $X$ be a smooth projective geometrically irreducible variety over $k$ such that for some complex embedding $\tau:k\to \C$ the complex manifold $X_\tau^\an$ is a compact ball quotient. Then we have:
\begin{itemize}
\item[(i)] $X$ is xeric,
\item[(ii)] $X_{k^\alg}$ contains no rational curves, and
\item[(iii)] for all primes $p$ and $p$-adic embeddings $\sigma: k\to \C_p$, the $p$-adic manifold $X_\sigma^{\an}$ is Brody hyperbolic.
\end{itemize}
\end{theorem}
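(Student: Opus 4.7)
My plan is to extract the three conclusions from two geometric facts about compact ball quotients: ampleness of $\Omega^1_X$, and the largeness of the algebraic fundamental group of $X$ in Koll\'ar's sense.

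For the first fact, I would work over $\C$ via $\tau$. The manifold $X_\tau^{\an}=\B_n/\Gamma$ inherits the invariant Bergman metric from the ball, and a standard computation shows that the induced hermitian metric on $\Omega^1_{X_\tau^{\an}}$ has Griffiths-positive curvature; by Kobayashi's theorem, $\Omega^1_{X_\tau^{\an}}$ is then an ample holomorphic vector bundle. Ampleness of a vector bundle on a projective variety is preserved and reflected by extension of scalars between fields, so $\Omega^1_{X_{k^{\alg}}}$ is ample, and hence so is $\Omega^1_{X_\sigma}$ for every $p$-adic embedding $\sigma:k\to\C_p$. From this, (ii) follows from the lemma in Section \ref{SecCot} asserting that a nef cotangent bundle precludes rational curves, and (iii) follows from Lemma \ref{LemmaPAdicSemiample}, since ample implies semiample.

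For (i), my primary route is to apply the theorem of Brunebarbe--Maculan \cite{BrunebarbeMaculan}: it suffices to verify that $X$ has large algebraic fundamental group in Koll\'ar's sense. Let $V\subseteq X_{k^{\alg}}$ be any positive-dimensional irreducible closed subvariety. Transport $V$ via $\tau$ to an analytic subvariety of $X_\tau^{\an}$ and lift a connected component $\widetilde V$ of its preimage under $\B_n\to X_\tau^{\an}$; the stabilizer of $\widetilde V$ in $\Gamma$ agrees, up to finite index, with the image of $\pi_1(V^{\mathrm{norm}})\to \Gamma$. If this image were finite then $\widetilde V$ would be a compact positive-dimensional analytic subvariety of the bounded domain $\B_n\subseteq\C^n$, which is impossible. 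Hence the image is infinite for every such $V$, and Brunebarbe--Maculan yields xericity.

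As an alternative proof of (i), I would use Section \ref{SecDiag}: take $\Acal=\Kcal_X$, ample by the first paragraph, and show that $\hat s(X,\Kcal_X)=0$. Using that $\Gamma$ is residually finite, one constructs a cofinal system of torsion-free normal finite-index subgroups $\Gamma_d\subseteq \Gamma$ defining \'etale Galois covers $X_d=\B_n/\Gamma_d\to X_\tau^{\an}$, themselves compact ball quotients with $\pi_d^*\Kcal_X=\Kcal_{X_d}$. A Bergman-metric computation should show that the Seshadri constant of $\Kcal_{X_d}$ along the diagonal of $X_d\times X_d$ grows with the injectivity radius of $X_d$, which can be driven to infinity in a suitable subtower; inverse control of the $s$-invariant by this Seshadri constant then yields $s(X_d,\Kcal_{X_d})\to 0$ and hence $\hat s(X,\Kcal_X)=0$, so that Corollary \ref{CorosetaleXeric} applies. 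To feed the construction into the definition of $\hat s$, which requires covers defined over $k$, one invokes Lemma \ref{LemmaEtalek} to descend. The main technical obstacle in this second route is the analytic Seshadri estimate in deep covers and its algebraization over a number field; the Brunebarbe--Maculan route bypasses all this at the cost of invoking their result as a black box.
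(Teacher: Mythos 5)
Your proof follows essentially the same approach as the paper: both routes you give for (i) --- the Brunebarbe--Maculan \cite{BrunebarbeMaculan} large-algebraic-fundamental-group argument and the $\hat s(X,\Kcal_X)=0$ argument via Seshadri constants of the diagonal along \'etale covers descended through Lemma \ref{LemmaEtalek} and fed into Corollary \ref{CorosetaleXeric} --- are exactly the two proofs in the paper, and your (iii) matches the paper's via ampleness of $\Omega^1_X$ and Lemma \ref{LemmaPAdicSemiample}. The ``analytic Seshadri estimate in deep covers'' that you flag as the main technical obstacle in the alternative route is precisely what the paper cites from Hwang--To \cite{HwangTo}, namely $\varepsilon(\Kcal_M,\Delta_M)\ge 2(n+1)\sinh^2(\rho_M/4)$; the only cosmetic difference is in (ii), where the paper invokes complex hyperbolicity of the ball quotient directly rather than your equally valid nef-cotangent argument.
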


Before proving this result we need some preliminaries on compact ball quotients.

Let $M$ and $\Gamma$ be as before. The quotient map $\pi_\Gamma: \B_n\to M=\B_n/\Gamma$ is unramified; in fact, it is the universal covering map. It follows that every finite \'etale cover of $M$ is again a compact ball quotient. Furthermore, as $\B_n$ is hyperbolic so is $M$, but more is true:
\begin{lemma}\label{LemmaBallCot} The cotangent bundle of $M$ is ample, and so is the canonical sheaf $\Kcal_M$.
\end{lemma}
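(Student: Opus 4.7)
The plan is to exploit the classical Kähler geometry of the ball. I would equip $\B_n$ with its Bergman metric $g$, a complete Kähler--Einstein metric with negative Einstein constant that is invariant under the full holomorphic automorphism group $\Aut(\B_n)$. A direct computation---or the general theory of irreducible Hermitian symmetric spaces of non-compact type and rank one---yields that the curvature tensor has the form (up to a positive normalization)
$$
R_{i\bar{j}k\bar{l}} = -\bigl(g_{i\bar{j}}g_{k\bar{l}} + g_{i\bar{l}}g_{k\bar{j}}\bigr),
$$
so that $(\Tcal_{\B_n},g)$ has Griffiths-negative (in fact Nakano-negative) curvature.

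Since $\Gamma \subseteq \Aut(\B_n)$ acts by holomorphic isometries of $g$, the metric descends to a smooth Hermitian metric on $\Tcal_M$ whose curvature is still Griffiths-negative, and dually $\Omega^1_M$ acquires a smooth Hermitian metric with Griffiths-positive curvature. I would then invoke the classical criterion of Griffiths: a holomorphic vector bundle on a compact complex manifold admitting a Hermitian metric with Griffiths-positive curvature is ample in the sense of Hartshorne, i.e. $\Ocal_P(1)$ on $P = \Proj_M(\Omega^1_M)$ is ample. This gives the ampleness of $\Omega^1_M$.

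Ampleness of $\Kcal_M = \det\Omega^1_M$ then follows in two equivalent ways: either from the standard fact that the determinant line bundle of an ample vector bundle is ample, or directly from the Kähler--Einstein condition $\mathrm{Ric}(g) = -c\,g$ with $c>0$, which exhibits $-\mathrm{Ric}(g)$ as a positive $(1,1)$-form representing a positive multiple of $c_1(\Kcal_M)$, whence Kodaira's embedding theorem gives ampleness. There is no serious obstacle; the only mild care required is to keep the sign conventions between Kähler curvature, Griffiths positivity, and Hartshorne ampleness of vector bundles consistent, and to use the $\Aut(\B_n)$-invariance of $g$ to pass cleanly from $\B_n$ to the quotient $M$.
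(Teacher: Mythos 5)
Your proposal is correct and follows essentially the same route as the paper, which simply records that the result is classical and follows by endowing $\Omega^1_M$ with a metric coming from the uniformization $\pi_\Gamma:\B_n\to M$. You have spelled out the standard details of that argument (Bergman metric on $\B_n$, descent to $M$ by $\Gamma$-invariance, Griffiths positivity of $\Omega^1_M$ and hence ampleness, and ampleness of $\Kcal_M$ via the determinant or Kähler--Einstein condition); no gap.
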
 
This result is well-known; it follows from endowing $\Omega^1_M$ with a suitable metric coming from the uniformization $\pi_\Gamma$.

Using $\pi_\Gamma$ one gives $M$ the standard hyperbolic metric which allows one to define its injectivity radius $\rho_M>0$. As $\Gamma$ is residually finite, one has:

\begin{lemma}\label{LemmaBigRho} If $M$ is a compact ball quotient, then there are finite \'etale covers of $M$ with arbitrarily large injectivity radius.
\end{lemma}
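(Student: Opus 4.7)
The plan is to exploit residual finiteness of the uniformizing lattice $\Gamma$: given any target injectivity radius $R$, I want to kill, in a finite-index subgroup, all the elements of $\Gamma$ responsible for displacements $\le 2R$, and then take the associated cover.

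First I would recall the connection between injectivity radius and displacement. Since $\B_n$ is a Hadamard manifold and any finite-index subgroup $\Gamma' \le \Gamma$ still acts freely, properly discontinuously and cocompactly by isometries, a standard computation yields
$$\rho_{\B_n/\Gamma'} \;=\; \tfrac{1}{2}\inf\bigl\{d(x,\gamma' x) : x \in \B_n,\ \gamma' \in \Gamma'\setminus\{1\}\bigr\},$$
where $d$ is the hyperbolic/Bergman distance. Thus producing covers with arbitrarily large injectivity radius is the same as producing finite-index subgroups of $\Gamma$ whose non-trivial elements have everywhere-large displacement on $\B_n$.

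Next, given $R>0$, I would fix a compact fundamental domain $F \subseteq \B_n$ for $\Gamma$ and form the ``bad set''
$$S_R \;=\; \bigl\{\gamma \in \Gamma\setminus\{1\} : d(x,\gamma x) \le 2R\text{ for some }x \in F\bigr\}.$$
Proper discontinuity of $\Gamma$ together with compactness of $F$ and of its $2R$-neighborhood forces $S_R$ to be finite. The residual finiteness step enters here: $\Gamma$ is a finitely generated subgroup of $\mathrm{PU}(n,1)$, hence a finitely generated linear group in characteristic zero, hence residually finite by Mal'cev's theorem. This lets me separate each element of the finite set $S_R$ from the identity by finite quotients; intersecting the resulting kernels, and then intersecting with the finitely many $\Gamma$-conjugates, yields a normal subgroup $\Gamma' \trianglelefteq \Gamma$ of finite index with $\Gamma' \cap S_R = \emptyset$. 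Then $\B_n/\Gamma' \to M$ is the desired finite \'etale cover, and it inherits the structure of a compact ball quotient.

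The main obstacle, and the step I would verify with care, is that $\Gamma' \cap S_R = \emptyset$ actually forces $\rho_{\B_n/\Gamma'} \ge R$. A priori the defining condition of $S_R$ only controls displacements on the fundamental domain $F$, whereas the injectivity-radius formula ranges over all of $\B_n$. This is precisely where I would use the normality of $\Gamma'$: writing an arbitrary $y \in \B_n$ as $y = \gamma_0 x$ with $\gamma_0 \in \Gamma$ and $x \in F$, isometry of the $\Gamma$-action gives
$$d(y,\gamma' y) \;=\; d\bigl(x,\, (\gamma_0^{-1}\gamma'\gamma_0)\, x\bigr),$$
and normality ensures $\gamma_0^{-1}\gamma'\gamma_0 \in \Gamma'\setminus\{1\} \subseteq \Gamma \setminus S_R$, so the displacement exceeds $2R$. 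Letting $R \to \infty$ produces a sequence of finite \'etale covers of $M$ whose injectivity radii tend to infinity.
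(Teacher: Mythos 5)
Your proof is correct and complete. For context: the paper does not actually supply a proof of Lemma~\ref{LemmaBigRho}; it merely states it as a known consequence of the residual finiteness of $\Gamma$. What you have written is precisely the standard argument behind that assertion: (a) the curvature assumption lets you express the injectivity radius as half the global minimal displacement; (b) cocompactness plus proper discontinuity makes the set $S_R$ of ``small-displacement'' elements (tested on a compact fundamental domain) finite; (c) $\Gamma$ is a finitely generated linear group in characteristic $0$, hence residually finite by Mal'cev, so one can find a finite-index normal subgroup disjoint from $S_R$; (d) normality is exactly what upgrades the bound on $F$ to a bound on all of $\B_n$, via the conjugation identity $d(\gamma_0 x,\gamma'\gamma_0 x)=d(x,\gamma_0^{-1}\gamma'\gamma_0 x)$. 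You were right to flag step (d) as the place needing care, and you handled it correctly.

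Two small cosmetic remarks. First, once you take kernels of finite quotients separating each element of $S_R$ from the identity and intersect them, the resulting subgroup is already normal of finite index, so the extra step of intersecting with $\Gamma$-conjugates is redundant (though harmless). Second, it is worth saying explicitly why $\Gamma$ is finitely generated --- this follows from cocompactness (e.g.\ the Milnor--\v{S}varc lemma, or directly from the fact that only finitely many translates of a compact fundamental domain meet any fixed compact set) --- since finite generation is the hypothesis feeding into Mal'cev's theorem.
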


As a consequence of the last lemma and results of Hwang and To \cite{HwangTo} we get:

\begin{corollary}\label{CoroBallsetale} Let $X$ be as in Theorem \ref{ThmBall} with $X(k)\ne\emptyset$. Then $\hat{s}(X,\Kcal_X)=0$ (note that $\Kcal_X$ is ample by Lemma \ref{LemmaBallCot}).
\end{corollary}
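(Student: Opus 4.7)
The plan is to exhibit, for each $\nu\ge 1$, a finite \'etale Galois cover $f_\nu\colon Y_\nu\to X$ defined over $k$, with $Y_\nu$ geometrically irreducible, such that $s(Y_\nu, f_\nu^*\Kcal_X)\to 0$ as $\nu\to\infty$. By the definition of $\hat{s}$, this will immediately give $\hat{s}(X,\Kcal_X)=0$.

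First I would apply Lemma \ref{LemmaBigRho} to $X_\tau^{an}=\B_n/\Gamma$ to extract a sequence of finite index normal subgroups $\Gamma_\nu\subseteq \Gamma$ for which the associated compact ball quotients $M_\nu:=\B_n/\Gamma_\nu$ have injectivity radii tending to $\infty$. By GAGA for projective complex varieties, each analytic \'etale cover $M_\nu\to X_\tau^{an}$ is the analytification of a finite \'etale map of smooth projective varieties $X_\nu^{\C}\to X_\tau$ over $\C$. Via the canonical identification between the algebraic \'etale fundamental group and the profinite completion of the topological fundamental group in characteristic zero, these covers descend to finite \'etale covers $X_\nu\to X_{k^{\alg}}$ defined over $k^{\alg}$, whose base change along $\tau$ recovers $X_\nu^\C\to X_\tau$.

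Next, using the hypothesis $X(k)\ne\emptyset$, I would invoke Lemma \ref{LemmaEtalek} to produce a finite \'etale Galois cover $f_\nu\colon Y_\nu\to X$ defined over $k$, with $Y_\nu$ geometrically irreducible, whose geometric base change $(Y_\nu)_{k^{\alg}}\to X_{k^{\alg}}$ factors through $X_\nu\to X_{k^{\alg}}$. After base change along $\tau$, the complex manifold $(Y_\nu)_\tau^{an}$ becomes a finite \'etale cover of $M_\nu$, hence is of the form $\B_n/\Gamma_\nu'$ for some finite-index subgroup $\Gamma_\nu'\subseteq \Gamma_\nu$. In particular its injectivity radius dominates that of $M_\nu$ and so also tends to $\infty$. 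Since $f_\nu$ is \'etale, $f_\nu^*\Kcal_X=\Kcal_{Y_\nu}$, and since the $s$-invariant is preserved under extension of the ground field, $s(Y_\nu, f_\nu^*\Kcal_X)=s\bigl((Y_\nu)_\tau, \Kcal_{(Y_\nu)_\tau}\bigr)$.

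To finish, I would invoke the Hwang--To estimates \cite{HwangTo}: for a compact ball quotient $M$ of dimension $n$ and injectivity radius $\rho$, they provide a lower bound on the Seshadri-type positivity of $\Kcal_M\boxtimes \Kcal_M$ along the diagonal $\Delta_M\subseteq M\times M$ which grows without bound as $\rho\to\infty$. Since the $s$-invariant is (the reciprocal of) exactly such a Seshadri constant, this yields $s((Y_\nu)_\tau, \Kcal_{(Y_\nu)_\tau})\to 0$, and therefore $\hat{s}(X,\Kcal_X)=0$. The main obstacle is this last step: one must verify carefully that the Hwang--To bounds, which are often formulated for Seshadri constants at a single point, genuinely translate into lower bounds for the nefness threshold of $b^*(\Kcal\boxtimes \Kcal)-E$ on the blow-up along the whole diagonal, so as to match the $s$-invariant of Section \ref{SecDiag}. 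The remainder of the argument is a routine combination of covering space theory, GAGA, descent in characteristic zero, and the lemmas already established.
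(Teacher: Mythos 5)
Your proposal is correct and follows essentially the same route as the paper: pass to \'etale covers with injectivity radius tending to infinity (Lemma \ref{LemmaBigRho}), descend to Galois covers over $k$ via Lemma \ref{LemmaEtalek}, and invoke the Hwang--To estimate together with the reciprocity between the Seshadri constant and the $s$-invariant. The worry you flag at the end is not actually an obstacle: Proposition~3 of \cite{HwangTo} is stated directly as a lower bound $\varepsilon(\Kcal_M,\Delta_M)\ge 2(n+1)\sinh^2(\rho_M/4)$ for the Seshadri constant of $\Kcal_M\boxtimes\Kcal_M$ along the whole diagonal $\Delta_M$, and this Seshadri constant along a subvariety is by definition the reciprocal of the $s$-invariant used in Section~\ref{SecDiag} (cf.\ Section~5.4 of \cite{Lazarsfeld}), so no further translation is needed.
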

\begin{proof} Let $M$ be a compact ball quotient of dimension $n$. By Proposition 3 of \cite{HwangTo} the Seshadri constant of the diagonal of $M\times M$ with respect to $\Kcal_M$, which we denote by $\varepsilon(\Kcal_M,\Delta_M)$, satisfies
$$
\varepsilon(\Kcal_M,\Delta_M) \ge 2(n+1) \sinh^2(\rho_M/4).
$$

Let $R$ be any large positive number. By Lemma \ref{LemmaBigRho} there is some finite \'etale cover $N\to X_\tau^\an$  with injectivity radius $\rho_N\ge R$. As the map is \'etale it descends to $k^{\alg}$ and applying Lemma \ref{LemmaEtalek} we find a finite \'etale Galois cover $f:Y\to X$ defined over $k$ which factors through $N$ after base change. As it factors through $N$ we have $\rho_Y\ge \rho_N\ge R$ where $\rho_Y$ is the injectivity radius of the compact ball quotient $Y_\tau^{\an}$.

From its definition, one sees that the Seshadri constant is the reciprocal of the $s$-invariant (see the discussion in Section 5.4 of \cite{Lazarsfeld}). So 
$$
s(Y,\Kcal_{Y}) \le \frac{1}{2(n+1) \sinh^2(R/4)}
$$
with $n=\dim Y=\dim X$. Note that $\Kcal_Y = f^* \Kcal_X$ because $f$ is \'etale. So
$$
\hat{s}(X,\Kcal_X) \le \frac{1}{2(n+1) \sinh^2(R/4)}.
$$
As $R$ can be taken as large as we want, the result follows.
\end{proof}

Finally we have:

\begin{proof}[Proof of Theorem \ref{ThmBall}] We prove each item separately:

\emph{Proof of (i)}. As promised in Section \ref{SecOutline}, we give two different proofs. 

By work of Brunebarbe and Maculan \cite{BrunebarbeMaculan} mentioned in the introduction, smooth projective geometrically irreducible varieties over $k$ with large algebraic fundamental group in the sense of Koll\'ar are xeric. As explained in the same reference, this condition is satisfied by compact ball quotients.

Alternatively: Corollary \ref{CoroBallsetale} gives $\hat{s}(X,\Kcal_X)=0$ (possibly after a finite extension of $k$) and we apply Corollary \ref{CorosetaleXeric} to conclude that $X$ is xeric.

\emph{Proof of (ii)}. As mentioned before, compact ball quotients are (complex) hyperbolic. A rational curve would give a non-constant holomorphic map $\C\to X_\tau^\an$, which is not possible.

\emph{Proof of (iii)}. By Lemma \ref{LemmaBallCot}, we have that $\Omega^1_X$ is ample. Hence it is semiample and we conclude by Lemma \ref{LemmaPAdicSemiample} after base change to $\C_p$ using any embedding $k\to\C_p$.
\end{proof}



\section{Acknowledgments}

N.G.-F. was supported by ANID Fondecyt Regular grant 1211004 from Chile. H.P. was supported by ANID Fondecyt Regular grant 1230507 from Chile. This project has been presented in various conferences and seminars, and we benefited from several questions from the audience. In particular we thank Jordan Ellenberg, Daniel Litt, and Ananth Shankar for useful comments.


\end{document}